\tikzset{
    every picture/.style={baseline=11pt,scale=.5}
}
\pgfplotsset{compat=1.7}
\definecolor{airforceblue}{rgb}{0.36, 0.54, 0.66}
\crefname{maintheorem}{Theorem}{Theorems}
\DeclareMathOperator\Ker{Ker}
\let\Im\relax\DeclareMathOperator{\Im}{Im}
\DeclareMathOperator\Sp{Span}
\DeclareMathOperator\irr{Irr}
\DeclareMathOperator\val{val}
\DeclareMathOperator\Hom{Hom}
\DeclareMathOperator\Gr{Gr}
\DeclareMathOperator{\rk}{rank}
\DeclareMathOperator{\pr}{pr}
\DeclareMathOperator{\mat}{Mat}
\newcommand\n{\mathfrak n}
\newcommand{\g}{\mathfrak g}
\newcommand{\p}{\mathfrak p}
\newcommand\ZZ{\mathbb Z}
\newcommand\NN{\mathbb N}
\newcommand\CC{\mathbb C}
\newcommand\TT{\mathbb T}
\newcommand\OO{\mathbb O}
\newcommand\cO{\mathcal O}
\newcommand\cK{\mathcal K}
\newcommand\cN{\mathcal N}
\newcommand\cT{\mathcal T}
\newcommand\bsm{\begin{smallmatrix}}
\newcommand\esm{\end{smallmatrix}}
\newcommand\xt[1][t]{[\hspace{-.12em}[ {#1} ]\hspace{-.12em}]} 
\newcommand\xT[1][t]{(\!( {#1} )\!)} 
\newcommand{\new}[1]{\textbf{#1}}
\newcommand{\flip}[1]{\reflectbox{\rotatebox[origin=c]{180}{#1}}}
\newtheorem{maintheorem}{Theorem} 
\theoremstyle{plain}
\newtheorem{theorem}{Theorem}
\newtheorem*{theorem*}{Theorem}
\newtheorem{conjecture}{Conjecture}
\newtheorem{proposition}{Proposition}
\newtheorem{lemma}{Lemma}
\newtheorem{corollary}{Corollary}
\theoremstyle{remark}
\newtheorem{example}{Example}
\newcommand\ot{\overline{\mathbb O}_\lambda\cap\mathbb T_\mu}
\newcommand{\mvy}{Mirkovi\'c--Vybornov~}
\newcommand{\mvi}{Mirkovi\'c--Vilonen~}
\title{Generalized orbital varieties for Mirkovi\'c--Vybornov slices as affinizations of Mirkovi\'c--Vilonen cycles}
\author{Anne Dranowski}
\email{adranows@math.toronto.edu}
\address{Department of Mathematics, University of Toronto, Room 6135, 40 St. George Street, Toronto, Ontario, Canada M5S 2E4}
\begin{document}
\begin{abstract}
    We show that generalized orbital varieties for \mvy slices can be indexed by semi-standard Young tableaux. We also check that the \mvy isomorphism sends generalized orbital varieties to (dense subsets of) \mvi cycles, such that the (combinatorial) Lusztig datum of a generalized orbital variety, which it inherits from its tableau, is equal to the (geometric) Lusztig datum of its MV cycle. 
\end{abstract}
%
\date{\today}
\maketitle
\tableofcontents
\section{Introduction}
\label{s:intro}
\subsection{Main result}
\label{ss:mainr}
In this paper, we show that generalized orbital varieties for \mvy slices are in bijection with semi-standard Young tableaux, and, via the \mvy isomorphism \cite{mirkovic2007geometric}, can be identified with MV cycles.

Let \(\cT(\lambda)_\mu\) denote the set of semi-standard Young tableaux of shape \(\lambda = (\lambda_1\ge\lambda_2\ge\cdots\ge\lambda_\ell)\) and weight \(\mu = (\mu_1\ge\mu_2\ge\cdots\ge\mu_m)\).\footnote{The weight of a tableau in the alphabet \(\{1,2,\ldots, m\}\) is the \(m\)-tuple of non-negative integers whose \(i\)th entry is the number of times \(i\) appears in the tableau.} Let 
\(N = \sum_1^\ell\lambda_i = \sum_1^m\mu_i\).
We order repeated entries of a tableau from left to right, so that the first occurrence of a given entry is its leftmost.
Then, for \(\tau\in\cT(\lambda)_\mu\) and \((i,k)
\in \{1,2,\ldots,m\}\times\{1,2,\ldots,\mu_i\}
\), we denote by \(\lambda^{(i,k)}_\tau\) (respectively, by \(\mu^{(i,k)}_\tau\)) the shape (respectively, the weight) of the tableau \(\tau^{(i,k)}\) obtained from \(\tau\) by deleting all \(j>i\) and all but the first \(k\) occurrences of \(i\). 
For ease of notation we identify \(\lambda^{(i)}_\tau\equiv \lambda^{(i,\mu_i)}_\tau\), \(\mu^{(i)}_\tau \equiv \mu^{(i,\mu_i)}_\tau\) and \(\tau^{(i)}\equiv\tau^{(i,\mu_i)}\) and when there is no confusion we omit the subscript \(\tau\).

\begin{example}
    \label{tableaunotn}
Let \(\tau = \young(112,23)\). Then \(\tau^{(2)} = \young(112,2)\) has shape \(\lambda^{(2)} = (3,1)\) and weight \(\mu^{(2)} = (2,2)\), while \(\tau^{(2,1)} = \young(11,2)\) has shape \(\lambda^{(2,1)} = (2,1)\) and weight \(\mu^{(2,1)} = (2,1)\).
\end{example}

The array 
\((\lambda^{(1)},\lambda^{(2)},\ldots,\lambda^{(m)})\)
is called the \new{GT-pattern}
of \(\tau\) (see \cite[Section 4]{BERENSTEIN1988453}). 
Note that \(\tau\) can be reconstructed from its GT-pattern. 
Moreover, as we now describe, \(\tau\) defines a matrix variety through its GT-pattern.

Let \(\mat(N)\)
denote the algebra of \(N\times N\) complex matrices.
Let \(\OO_\lambda\) denote the conjugacy class of the Jordan normal form \(J_\lambda\) associated to \(\lambda\), and let \(\TT_\mu\) denote the \mvy slice through the Jordan normal form \(J_\mu\) associated to \(\mu\). Elements of \(\TT_\mu\) take the form \(J_\mu + T\) for \(T\in\mat(N)\) any \(\mu\times\mu\)-block matrix with zeros everywhere except perhaps the first \(\min(\mu_i,\mu_j)\) columns of the last row of the \(\mu_i\times \mu_j\) block, for \(1\le i,j\le m\). 
For example, elements of \(\TT_{(3,2,1)} \) take the form 
\[
\left[ 
    \begin{BMAT}(e){ccc:cc:c}{ccc:cc:c}
    0 & 0 & 0 & 0 & 0 & 0 \\
    0 & 0 & 0 & 0 & 0 & 0 \\
    * & * & * & * & * & * \\
    0 & 0 & 0 & 0 & 0 & 0 \\
    * & * & 0 & * & * & * \\
    * & 0 & 0 & * & 0 & *
    \end{BMAT} 
\right]
\]
with \(*\)s denoting unconstrained entries. Note, \(\dim \TT_\mu = \sum_{i=1}^m(2i-1)\mu_i\).

Let \(\beta_\mu = (e^1_1,\ldots,e^{\mu_1}_1,\ldots,e^1_m,\ldots,e^{\mu_m}_m)\) be a \(\mu\)-enumeration of the standard basis of \(\CC^N\), and let \(V^{(i,k)}\) denote the span of the first \(\mu_1 + \cdots + \mu_{i-1} + k\) vectors of \(\beta_\mu\) for \((i,k)\in \{1,2,\ldots,m\}\times\{1,2,\ldots,\mu_i\}\).
We'll also identify \(V^{(i)}\equiv V^{(i,\mu_i)}\) for all \(i\).

Let \(\n\subset\mat(N) \) denote the subalgebra of upper-triangular matrices and consider 
\begin{equation}\label{eq:blockyXt}
X_\tau = \left\{
A \in \TT_\mu\cap\n : 
A\big|_{V^{(i)}} \in \OO_{\lambda^{(i)}}
\text{ for } 1\le i\le m
\right\}\,.
\end{equation}
Here, we are identifying \(A\big|_{V^{(i)}} \) with the top left \(N_i\times N_i\) submatrix of \(A\) for \(N_i =\sum_{j=1}^i \mu_j =\sum_{j=1}^i \mu^{(i)}_j = \sum_{j=1}^i \lambda^{(i)}_j\,. \)
\begin{example}
Let \(\tau = \young(112,23)\) as before. Then
\[
X_\tau = \left\{
    A = 
\left(\begin{BMAT}(e){ccccc}{ccccc}
  0 & 1 & 0 & 0 & 0 \\
  0 & 0 & a & b & c \\
  0 & 0 & 0 & 1 & 0 \\
  0 & 0 & 0 & 0 & d \\
  0 & 0 & 0 & 0 & 0
    \addpath{(2,5,:)ddrrddr}
    \addpath{(4,5,:)ddr} 
\end{BMAT}\right) : a,d = 0\text{ and } b,c\ne 0
\right\}\,.
\]
\end{example}
\begin{maintheorem}
\label{mainanne1}
\(X_\tau\) has one component \(X_\tau^{d}\) of maximum dimension \(d\) which can be computed from \(\tau\), or (independent of \(\tau\)) from \(\lambda\) and \(\mu\).
Moreover, the closure \(Z_\tau = \overline{X_\tau^{d}}\) is an irreducible component of \(\ot\cap\n\), and 
conversely, every irreducible component of \(\ot\cap\n\) is of this form.
\end{maintheorem}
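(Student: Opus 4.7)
The plan is to combine a stratification of $\OO_\lambda \cap \TT_\mu \cap \n$ by Jordan-type data with an inductive dimension count, then match top-dimensional strata to irreducible components of $\ot \cap \n$.

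For the stratification, observe that for $A \in \OO_\lambda \cap \TT_\mu \cap \n$, the Jordan types of the restrictions $A\big|_{V^{(i)}}$ form the GT-pattern of a unique $\tau(A) \in \cT(\lambda)_\mu$, and upper semicontinuity of Jordan type gives a locally closed decomposition $\OO_\lambda \cap \TT_\mu \cap \n = \bigsqcup_\tau X_\tau$. Since $\OO_\lambda$ is dense in $\overline{\OO_\lambda}$, every irreducible component of $\ot \cap \n$ must be the closure of some irreducible component of some $X_\tau$; so the theorem reduces to (a) proving that each nonempty $X_\tau$ has a unique top-dimensional component of some common dimension $d = d(\lambda,\mu)$, and (b) showing that all such closures are distinct and account for every component.

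For (a), I would induct on $m$. The restriction map $\pi: X_\tau \to X_{\tau^{(m-1)}}$, $A \mapsto A\big|_{V^{(m-1)}}$, realizes $X_\tau$ as a family over a base which, by the inductive hypothesis, has a unique top-dim component. Over a generic $A' \in X_{\tau^{(m-1)}}$, the fiber $\pi^{-1}(A')$ parametrizes the choices of final block-row of $A$ (a specific affine space of the prescribed form in $\TT_\mu$) subject to the Jordan-type constraint $\lambda^{(m)} = \lambda$. Semistandardness of $\tau$ corresponds geometrically to the ``horizontal strip'' condition $\lambda/\lambda^{(m-1)}$ under which such extensions exist; I then expect the generic fiber to have a unique top-dim component, whose dimension depends only on $\lambda - \lambda^{(m-1)}$ and $\mu_m$. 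Summing these fiber contributions along the induction produces the desired global formula $d(\lambda,\mu)$ independent of $\tau$.

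For (b), the closures $Z_\tau = \overline{X_\tau^d}$ are distinct irreducible subvarieties of $\ot \cap \n$ of common dimension $d(\lambda,\mu)$. Since $\ot \cap \n$ is equidimensional (which can be taken from the \mvy isomorphism together with the equidimensionality of MV cycles in the affine Grassmannian, or verified by the recursive dimension count above), and since the $X_\tau$ cover a dense open subset, the $Z_\tau$ exhaust the irreducible components. The main obstacle will be the fiber analysis in step (a): establishing that for generic $A'$ the extension space really has a unique top-dim component of the predicted dimension, since the block-triangular geometry of $\TT_\mu \cap \n$ interacts in a subtle way with the discrete Jordan-type condition. I expect the ``first $k$ occurrences'' ordering convention on $\tau$ to dictate precisely which block entries are generically free versus forced, and hence to pin down the unique generic component of each fiber.
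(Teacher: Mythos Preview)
Your overall architecture matches the paper's: stratify $\OO_\lambda\cap\TT_\mu\cap\n$ by GT-pattern, show each $X_\tau$ has a unique top component of a common dimension $d=\langle\lambda-\mu,\rho\rangle$ by induction on $m$ via restriction, then read off the tableau of a generic point to see that the $Z_\tau$ exhaust the components. Part~(b) is essentially the paper's Theorem~\ref{anne1} verbatim.

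The genuine gap is in (a), exactly where you flag it. If you remove all $\mu_m$ copies of $m$ at once, the fiber of $\pi:X_\tau\to X_{\tau^{(m-1)}}$ over $A'$ is the set of upper-triangular extensions in $\TT_\mu$ whose Jordan type jumps from $\lambda^{(m-1)}$ to $\lambda$; there is no direct reason this locus has a unique top component, and the ``first $k$ occurrences'' convention alone does not tell you which entries are generically free. The paper's key move, which your plan is missing, is a two-step refinement. First (Lemmas~\ref{impliesboxyalg}, \ref{boxyalg}, Proposition~\ref{boxyXt}) one proves a nontrivial rank inequality showing that the \emph{blocky} conditions $A\big|_{V^{(i)}}\in\OO_{\lambda^{(i)}}$ automatically force the finer \emph{boxy} conditions $A\big|_{V^{(i,k)}}\in\OO_{\lambda^{(i,k)}}$ at every intermediate stage. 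Second, with the boxy description in hand, one removes a single box at a time via $X_\tau\to X_{\tau-\young(m)}$; now the fiber over any $B$ is identified (Lemma~\ref{irrfib}) with
\[
\big((B^{\lambda_r-1})^{-1}\Im B^{\lambda_r}\setminus (B^{\lambda_r-2})^{-1}\Im B^{\lambda_r-1}\big)\cap L,
\]
an open subset of a linear subspace, hence visibly irreducible of dimension $m-r$. Iterating $\mu_m$ times and invoking a general lemma on surjections with equidimensional irreducible fibers (Proposition~\ref{myirrcl}) yields the unique top component, with the fiber dimensions telescoping to $\langle\lambda-\mu,\rho\rangle$.

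In short: the missing idea is the boxy refinement, which turns an intractable block-column fiber into a tower of tractable single-column fibers.
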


The latter half of this claim is due to \cite{zinn2015quiver} where it is stated without proof.
We call \(Z_\tau\) a \new{generalized orbital variety for the \mvy slice \(\TT_\mu\)}, for when \(\tau\) is a standard Young tableau, so \(\mu = (1,\ldots,1)\), \(\TT_\mu = \mat(N)\) and the decomposition 
\(\overline\OO_\lambda\cap\n = \cup_{\sigma \in S(\lambda)_{\mu}} Z_\sigma\)
recovers the ordinary orbital varieties of \cite{joseph1984variety} by \cite{spaltenstein1976fixed}. 

Now, \(\lambda\) and \(\mu\) can also be viewed as coweights of \(G = GL(m,\CC)\)
parametrizing MV cycles via their images \(L_\lambda\) and \(L_\mu\) in the affine Grassmannian \(\Gr = G(\cK)/G(\cO)\) of \(G\). Here \(\cO = \CC\xt\) and \(\cK = \CC\xT\).

Let \(T\subset G\) be a maximal torus and consider the homomorphism which identifies \(z_i\in X_\bullet(T) = \Hom(\CC^\times, T)\) and \(e_i\in\ZZ^m\) such that \(z_i(t) = t^{e_i} \) is the diagonal matrix with \((k,k)\) entry equal to \(t\) if \(k=i\) and 1 if \(k\ne i\). Thus \(\nu\in\ZZ^m\) defines \(t^\nu\in G(\cK)\) which in turn defines \(L_\nu = t^\nu G(\cO)\in\Gr\).

Let \(\Gr^\lambda\) denote the \(G(\cO)\) orbit of \(L_\lambda\) and let 
\(S^\mu_-\)
denote the 
\(U_-(\cK)\)
orbit of \(L_\mu\). Here, 
\(U_-\subset G\)
denotes the subgroup of invertible 
lower-triangular matrices.
\new{MV cycles of coweight \((\lambda,\mu)\)} are defined as the irreducible components of 
\(\overline{\Gr^\lambda\cap S^\mu_-}\).
By \cite{mirkovic2007geometric} they give a basis of the \(\mu\)-weight space of the highest weight \(\lambda\) irreducible representation \(L(\lambda)_\mu\) of (the Langlands dual group of \( G\) --- in this case) \(G\). 

Let 
\(\Phi^+ \)
denote the set of positive coroots 
\(\{\alpha_i+\cdots+\alpha_j : 1\le i < j \le m\}\)
for 
\(\alpha_i = z_i - z_{i-1}\) in \(\ZZ^m\).
By \cite[Theorem 4.2]{kamnitzer2010mirkovic} the MV cycles are parametrized by their \new{\(\bf i\)-Lusztig data}, which are \(\Phi^+\)-tuples of non-negative integers ordered by a choice of reduced word \(\bf i\) for the longest element \(w_0\) in the Weyl group of \(G\), and computed intrinsically in \(\Gr\).

Elements of \(\cT(\lambda)_\mu\) also acquire \(\bf i\)-Lusztig data in \(\NN^{\Phi^+}\) from their GT-patterns (see \Cref{s:equat}, \Cref{comld}).

Let us fix the parametrization \({\bf i} = (12\dots m\dots 12 1)\) inducing the order
\[z_1 - z_2 < \cdots < z_1 - z_m < z_2 - z_3 < \cdots < z_2 - z_m < \cdots < z_{m-1} - z_m\,,\]
and henceforth omit \(\bf i\) from the notation.
\begin{maintheorem}
\label{mainanne2}
The \mvy isomorphism, restricts to an isomorphism \(\psi\) of \(\ot\cap\n\) and \(\overline{\Gr^\lambda}\cap S^\mu_- \) such that \(\overline{\psi(Z_\tau)}\) is dense in an MV cycle with Lusztig datum equal to the Lusztig datum of \(\tau\).
\end{maintheorem}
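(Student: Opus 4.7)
The plan is to reduce the theorem to a matching of Lusztig data by first showing $\psi$ restricts as claimed, then invoking \Cref{mainanne1} together with \cite{kamnitzer2010mirkovic} to get a bijection of components automatically, and finally identifying the labels by a stratum-wise computation along the chosen reduced word.

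First I would check that the full \mvy isomorphism carries $\ot\cap\n$ isomorphically onto $\overline{\Gr^\lambda}\cap S^\mu_-$. The isomorphism already matches $\ot$ with the closed transversal slice to $L_\mu$ in $\overline{\Gr^\lambda}$, so what remains is to verify that the upper-triangularity cut $A\in\n$ transports to the semi-infinite cut $L\in S^\mu_-$. This should follow by unpacking the explicit formulas of \cite{mirkovic2007geometric}: the matrix $A\in\TT_\mu$ is turned into an $\cO$-lattice $L_A\subset \cK^m$ via a resolvent construction, and one reads off directly that $A\in\n$ if and only if the relevant lower-block entries in the expansion of $L_A$ vanish, which is precisely the condition for $L_A\in S^\mu_-$.

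Next I would argue that $\psi$ automatically carries the closures $Z_\tau$ to (dense opens in) MV cycles. By \Cref{mainanne1} the irreducible components of $\ot\cap\n$ are exactly $\{Z_\tau\}_{\tau\in\cT(\lambda)_\mu}$, while the irreducible components of $\overline{\Gr^\lambda}\cap S^\mu_-$ are the MV cycles, which are in bijection with their ${\bf i}$-Lusztig data in $\NN^{\Phi^+}$. Since both index sets have cardinality $\dim L(\lambda)_\mu$ and $\psi$ is an isomorphism of varieties, it induces a bijection between the two sets of components. The substance of the theorem is thereby reduced to identifying the \emph{label} of the MV cycle $\overline{\psi(Z_\tau)}$ with the combinatorial Lusztig datum attached to $\tau$.

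Finally, and this is the main obstacle, I would compute both Lusztig data stratum-by-stratum along ${\bf i} = (12\cdots m\cdots 121)$. On the combinatorial side, the datum at each positive coroot $\alpha_i+\cdots+\alpha_j$ is a signed difference of row lengths between the sub-shapes $\lambda^{(i,k)}$ and $\lambda^{(i-1)}$, readable directly from the GT-pattern. On the geometric side, by \cite[Theorem 4.2]{kamnitzer2010mirkovic} the Lusztig datum of an MV cycle is computed by iterated generic valuations with respect to successive one-parameter subgroups ordered by ${\bf i}$. To match the two, I would use that $\psi$ intertwines the flag $V^{(1)}\subset V^{(2)}\subset\cdots\subset V^{(m)}$ with the Levi/parabolic filtration underlying the Iwasawa decomposition used to define the semi-infinite orbits, so that the Jordan-type conditions $A|_{V^{(i)}}\in\OO_{\lambda^{(i)}}$ defining an open subset of $X_\tau$ transport to exactly the valuation conditions cutting out the MV cycle with the desired Lusztig datum. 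The hardest piece is the bookkeeping: verifying that the finer conditions encoded by the $\lambda^{(i,k)}$ pin down the nested semi-infinite intersections in the order prescribed by ${\bf i}$. I expect to proceed by induction on $m$, with the base case $m=2$ amounting to a single nonnegative integer computed from the rank of a block of $A$, and the inductive step using the Levi factorization along $V^{(i)}\subset V^{(i+1)}$ to reduce to a smaller slice where the earlier case applies.
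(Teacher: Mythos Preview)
Your outline is sound and follows the same arc as the paper's argument: Corollary~\ref{resmvy} is exactly your first step (checking directly from the formula for $\phi$ that $A\in\n$ iff $\phi(A)\in N_-(\cK)t^\mu$), and the bijection of components is immediate once that is established. The substantive content, as you say, is the matching of Lusztig data, and here too your key idea --- that $\psi$ intertwines the flag $V^{(1)}\subset\cdots\subset V^{(m)}$ with the Levi filtration on the Grassmannian side --- is precisely what the paper exploits. Concretely, the paper observes that $\phi(A)^T\big|_{F_b} = \phi_b(A\big|_{V^{(b)}})^T$, so the valuation $D_{[a\cdots b]}$ on $\psi(A)$ can be read off from the image of the restricted matrix under the smaller \mvy map $\psi_b$.

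Two points where your sketch diverges from what actually happens. First, you anticipate needing the finer boxy data $\lambda^{(i,k)}$; in fact for the chosen reduced word $\mathbf{i}=(12\cdots m\cdots 121)$ only the blocky shapes $\lambda^{(b)}$ enter, since the relevant chamber weights are the $[a\cdots b]$ and these are governed entirely by the Levi restrictions to $V^{(b)}$. The refined $\lambda^{(i,k)}$ were needed for irreducibility in \Cref{mainanne1}, not here. Second, there is a step you leave implicit: the Jordan-type condition $A\big|_{V^{(b)}}\in\OO_{\lambda^{(b)}}$ transports to $\psi_b(A\big|_{V^{(b)}})\in\overline{\Gr^{\lambda^{(b)}}}$, but the valuation $D_{[a\cdots b]}$ is constant on semi-infinite cells $S^\nu$, not on spherical orbits. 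The paper closes this gap (Lemma~\ref{whereisg}, applied to each $\psi_b$) by invoking the fact that an MV cycle meets its top semi-infinite cell in a dense open set; you should make this passage explicit. The paper also routes the computation through the transpose map $\Gr\to\Gr^T$ to match Kamnitzer's conventions, which is a technicality but one you will have to handle. Your proposed induction on $m$ is not needed: once the Levi compatibility is stated, the computation for all $(a,b)$ runs simultaneously and collapses to $n(Z)_{z_a-z_b}=\lambda^{(b)}_a-\lambda^{(b-1)}_a$ directly.
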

In particular, the \mvy isomorphism induces a Lusztig data preserving bijection between MV cycles of coweight \((\lambda,\mu)\) and semi-standard Young tableaux of shape \(\lambda\) and weight \(\mu\). 
\subsection{Applications and relation to other work}
\label{ss:appli}
\subsubsection{Measures of MV cycles}
\label{sss:measu}
By \cite{mirkovic2007geometric}, MV cycles yield a basis in representations of \(G\). In \cite{bkk2019}, the authors show that, combinatorially, this basis is the same as Lusztig's dual semi-canonical basis. In the appendix to \cite{bkk2019}, the appendix authors show that, geometrically, these bases are different. Our comparison relies on \Cref{mainanne1,mainanne2} together with results of \cite{bkk2019} on what geometric equality would entail. In particular, from \Cref{eq:blockyXt} we can determine the ideal \(I_\tau\) of \(X_\tau\). In turn, by normalization we can obtain from \(I_\tau\) the ideal of \(\overline{\psi(Z_\tau)}\). Thus the title of this paper.
\subsubsection{Big Springer fibres}
\label{sss:bigsp}
Set \(GL(N) \equiv GL(N,\CC)\). Given a partition \(\nu\vdash N\), let \(P_\nu\subset GL(N)\) be the corresponding parabolic subgroup and denote by \(\p_\nu\) its Lie algebra. We'll view elements of the partial flag variety \(X_\nu := GL(N)/P_\nu\) interchangeably as parabolic subalgebras of \( \mat(N)\) which are conjugate to \(\p_\nu \) and as flags \(0 = V_0\subset V_1 \subset \cdots \subset V_{\nu_1} = \CC^N \) such that \(\dim V_i/V_{i-1} = (\nu^T)_i\) for \(i = 1,\ldots,\nu_1\). Here \(\nu^T\) denotes the conjugate partition of \(\nu\).

Shimomura, in \cite{shimomura1980theorem}, establishes a bijection between components of \new{big Springer fibres} \((X_\mu)^u\), for fixed \(u-1\in\OO_\lambda\), and \(\cT(\lambda)_\mu\), generalizing Spaltenstein's decomposition in \cite{spaltenstein1976fixed} in case \(\mu = (1,\ldots,1)\), and implying that big Springer fibres also have the same number of top-dimensional components as \(\ot\cap\n\). We conjecture that the coincidence is evidence of a correspondence implying a bijection between the top-dimensional irreducible components of \(\OO_\lambda\cap\p_\mu\) and \(\ot\cap \n\).

Let \(\cN\) denote the nilpotent cone in \(\mat(N)\).
Let 
\( \widetilde{\g}_\mu = \{ (A,V_\bullet)\in \cN \times X_\mu : AV_i\subset V_i \text{ for } i = 1,\ldots, (\mu^T)_1\}\).
Equivalently, 
\(\widetilde{\g}_\mu = \{(A,\mathfrak p)\in \cN \times X_\mu : A\in \mathfrak p\}\). 
Let 
\(A = u-1 \in \OO_\lambda\) and consider the restriction of 
\(\pr_1 : \widetilde{\g}_\mu\to\cN\) defined by \(\pr_1(A,\p) = A\) to 
\(\widetilde{\g}_\mu^\lambda = \pr_1^{-1}(\OO_\lambda)\). 
We conjecture that the (resulting) diagram
\[
\begin{tikzcd}
& \OO_\lambda \cap \p_\mu \ar[r,mapsto] \ar[d,hook] & \{\p_\mu\} \ar[d,hook] \\ 
(X_\mu)^u \ar[d] \ar[r,hook] & \widetilde{\g}_\mu^\lambda \ar[r]\ar[d] & X_\mu \\
\{A\} \ar[r,hook] & \OO_\lambda
\end{tikzcd}    
\]
has an orbit-fibre duality (generalizing the bijections established in \cite[\S 6.5]{chriss2009representation} when \(\mu = (1,\ldots,1)\) and \(\OO_\lambda\cap\p_\mu = \OO_\lambda\cap\n\)) such that the maps
\(\OO_\lambda\cap\p_\mu \to \tilde \g^{\lambda}_\mu \leftarrow (X_\mu)^u\) 
give bijections on top dimensional irreducible components.
\subsubsection{Symplectic duality of small Springer fibres}
\label{sss:sympl}
By \cite[Theorem 5.37]{webster2017generalized}, the restriction of the parabolic analogue of the Grothendieck--Springer resolution \(\pi: T^\ast X_\mu\to \overline\OO_{\mu^t}\) 
to \(\mathfrak{X}_\mu^{\lambda^t} = \pi^{-1}(\overline\OO_{\mu^t}\cap\TT_{\lambda^t}) = \{(A, V_\bullet) \in \overline\OO_{\mu^t}\cap\TT_{\lambda^t} \times X_{\mu} : AV_i\subset V_{i-1}\text{ for all } i = 1,\ldots,(\mu^T)_1\} \) is symplectic dual to 
\(
    \pi^! : \mathfrak{X}_{\lambda^t}^{\mu} 
    \to \ot
\).
A hard consequence of this is that
\(H^{\text{top}}(\pi^{-1}(J_{\lambda^T})) = H_{\text{top}}(\ot\cap\n)\) where 
\(J_{\lambda^T}\in \overline\OO_{\mu^T}\cap\TT_{\lambda^T}\) 
denotes the Jordan normal form associated to \(\lambda^T\).
Haines, in \cite{haines2006equidimensionality}, establishes a bijection between components of \(\pi^{-1}(J_{\lambda^T})\) and \(\cT(\lambda)_\mu\). 
Thus, symplectic duality indirectly predicts that components of \(\ot\cap\n\) are in bijection with \(\cT(\lambda)_\mu\) too. 

\subsection{Acknowledgements}
\label{ss:ackno}
I would like to thank my advisor Joel Kamnitzer. His encouragement and suggestions have been valuable throughout this project.

\section{Generalized orbital varieties for \mvy slices}
\label{s:gener}

We begin by giving a more tractable description of the sets defined by \Cref{eq:blockyXt}.
\subsection{A boxy description of \texorpdfstring{\(X_\tau\)}{Xt}}
\label{ss:aboxy}
\begin{lemma}
    \label{impliesboxyalg}
Let \(B\) be an \((N-1)\times(N-1)\) matrix of the form
\[
\begin{bmatrix}
    C & v \\
    0 & 0 
\end{bmatrix}
\]
for some \((N-2)\times(N-2)\) matrix \(C\) and column vector \(v\). 
Let \(A \) be an \(N\times N\) matrix of the form 
\[
\begin{bmatrix}
    C & v & w \\
    0 & 0 & 1 \\
    0 & 0 & 0 
\end{bmatrix}    
\]
for some column vector \(w\).
Let \(p\ge 2\). If \(\rk C^p < \rk B^p  \), then \(\rk B^p < \rk A^p\). 
\end{lemma}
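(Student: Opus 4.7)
The plan is to compute $B^p$ and $A^p$ explicitly using their block-triangular structure, then reduce the rank comparison to a linear-algebraic statement about column spans, which I will resolve by a single trick of multiplying by $C$.

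First I would verify by induction on $p \ge 2$ the formulas
\[
B^p = \begin{bmatrix} C^p & C^{p-1}v \\ 0 & 0 \end{bmatrix}, \qquad A^p = \begin{bmatrix} C^p & C^{p-1}v & C^{p-1}w + C^{p-2}v \\ 0 & 0 & 0 \\ 0 & 0 & 0 \end{bmatrix}.
\]
The base case $p=2$ is a direct block multiplication; the inductive step is routine because the lower block rows of both matrices vanish, so only the top block row contributes to the product $A^{p+1}=A\cdot A^p$ (similarly for $B$). Note the restriction $p\ge 2$ is exactly what is needed for $C^{p-2}v$ to be defined. From these formulas,
\[
\rk B^p = \rk[\,C^p \mid C^{p-1}v\,], \qquad \rk A^p = \rk[\,C^p \mid C^{p-1}v \mid C^{p-1}w + C^{p-2}v\,],
\]
so the hypothesis $\rk C^p < \rk B^p$ translates to $C^{p-1}v \notin \col(C^p)$, and the desired conclusion $\rk B^p < \rk A^p$ translates to $C^{p-1}w + C^{p-2}v \notin \col(C^p) + \Sp(C^{p-1}v)$.

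The real work---and the only non-routine step---is this last non-containment. I would argue by contradiction: if $C^{p-1}w + C^{p-2}v = C^p y + \beta C^{p-1}v$ for some vector $y$ and scalar $\beta$, then multiplying both sides on the left by $C$ yields $C^p w + C^{p-1}v = C^{p+1}y + \beta C^p v$, whence $C^{p-1}v = C^p(Cy + \beta v - w) \in \col(C^p)$, contradicting the hypothesis. The one insight required is that left-multiplication by $C$ raises the exponent on $v$ to $p-1$ while simultaneously pushing the $w$-term into $\col(C^p)$; the rest is bookkeeping.
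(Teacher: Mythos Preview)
Your proof is correct and follows essentially the same approach as the paper: compute $B^p$ and $A^p$ explicitly, translate the hypothesis to $C^{p-1}v\notin\Im C^p$, and use left-multiplication by $C$ to derive the needed non-containment for the last column of $A^p$. The only cosmetic difference is that the paper first deduces the slightly stronger fact $C^{p-2}v+C^{p-1}w\notin\Im C^{p-1}$ (since $C^{p-2}v\notin\Im C^{p-1}$ while $C^{p-1}w\in\Im C^{p-1}$), which immediately implies your non-containment because $\Im C^p+\Sp(C^{p-1}v)\subset\Im C^{p-1}$; your direct contradiction argument achieves the same end.
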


\begin{proof}[Proof of \Cref{impliesboxyalg}]
Let 
\[
    B = 
\begin{bmatrix}
    C & v \\
    0 & 0 
\end{bmatrix}
\]
and let
\[
A = \begin{bmatrix}
    C & v & w \\
    0 & 0 & 1 \\
    0 & 0 & 0 
\end{bmatrix} 
= 
\left[\begin{BMAT}(@){c:c}{c:c}
    B & \begin{BMAT}{c}{cc}
        w \\ 1
    \end{BMAT} \\
    \begin{BMAT}{cc}{c}
        0 & 0
    \end{BMAT} & 0 
\end{BMAT}\right]\,.
\]
Suppose \(\rk B^p > \rk C^p \) for \(p\ge 0\). Clearly \(\rk A^p > \rk B^p\) for \(p = 0,1\) independent of the assumption. Suppose \(p\ge 2\). 
Since 
\[
B^p = \begin{bmatrix}
    C^p & C^{p-1} v \\
        0 & 0 
\end{bmatrix}
\]
this means \(C^{p-1} v \not\in\Im C^p \). So \(C^{p-2} v\not\in \Im C^{p-1}\) and \(C^{p-2} v + C^{p-1} w \not\in \Im C^{p-1}\).
Since 
\[
A^p = \begin{bmatrix}
C^p & C^{p-1} v & C^{p-1} w + C^{p-2} v\\
    0   & 0         & 0 \\
    0   & 0         & 0 
\end{bmatrix}    
\] 
it follows that \(\rk A^p > \rk B^p \) as desired.
\end{proof}

Now fix \(A\in X_\tau\) with \(\tau\in\cT(\lambda)_\mu\) as above. Recall that \(V^{(i,k)}\) denotes the span of the first \(\mu_1+\cdots + \mu_{i-1} + k\) vectors of \(\beta_\mu = (e^1_1,\ldots,e^{\mu_1}_1,\ldots,e^1_m,\ldots,e^{\mu_m}_m)\).
\begin{lemma}
    \label{boxyalg} \(A\big|_{V^{(m,\mu_m-1)}}\in\OO_{\lambda^{(m,\mu_m - 1)}}\).
\end{lemma}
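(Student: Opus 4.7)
The plan is to show that restricting $A$ to the codimension-$1$ $A$-invariant subspace $V^{(m,\mu_m-1)} \subset V^{(m)} = \CC^N$ removes exactly one box from the Jordan type $\lambda$, namely the box in row $r^\ast$, where $r^\ast$ (and $c^\ast := \lambda_{r^\ast}$) is the row (and column) of the rightmost $m$ in $\tau$. Since $\lambda^{(m,\mu_m-1)} = \lambda - \epsilon_{r^\ast}$, this is exactly the claim. (If $\mu_m = 1$ the lemma reduces to the hypothesis $A|_{V^{(m-1)}} \in \OO_{\lambda^{(m-1)}}$; assume $\mu_m \geq 2$ hereafter.) The argument interpolates between $A|_{V^{(m-1)}}$ and $A|_{V^{(m)}} = A$ through the full chain $V^{(m-1)} = V^{(m,0)} \subset V^{(m,1)} \subset \cdots \subset V^{(m,\mu_m)} = V^{(m)}$ and uses \Cref{impliesboxyalg} to pin down the step at which the box disappears.

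Set $A_k := A|_{V^{(m,k)}}$. Since $A$ is nilpotent and upper-triangular, it is strictly upper-triangular, so every $A_k$ has vanishing last row and hence the block shape $A_k = \bigl(\begin{smallmatrix} A_{k-1} & u_k \\ 0 & 0 \end{smallmatrix}\bigr)$ for $k \geq 1$; this forces $\rk(A_k^p) - \rk(A_{k-1}^p) \in \{0, 1\}$. For $2 \leq k \leq \mu_m$, row $N_{m-1}+k-1$ of $A$ is the $(k-1)$-th row of block row $m$ (not its last, since $k-1 < \mu_m$), so it coincides with the corresponding row of $J_\mu$: a single $1$ in column $N_{m-1}+k$ and zeros elsewhere. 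The triple $(A_{k-2}, A_{k-1}, A_k)$ therefore matches the form $(C, B, A)$ of \Cref{impliesboxyalg}, which yields: a rank increase at step $k-1$ forces a rank increase at step $k$. Hence the set $S_p := \{k : \rk(A_k^p) > \rk(A_{k-1}^p)\}$ is a terminal segment of $\{1, 2, \ldots, \mu_m\}$.

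It remains to count $|S_p|$. Using $\rk(A^p) = \sum_i (\lambda_i - p)^+$ (from $A \in \OO_\lambda$), $\rk(A_0^p) = \sum_i (\lambda_i^{(m-1)} - p)^+$ (from $A|_{V^{(m-1)}} \in \OO_{\lambda^{(m-1)}}$), and the fact that $\lambda - \lambda^{(m-1)}$ is the horizontal strip recording the positions of $m$'s in $\tau$, a direct calculation gives
\[
|S_p| = \rk(A^p) - \rk(A_0^p) = \#\{m\text{-boxes of }\tau\text{ whose column exceeds }p\}.
\]
This count is $\geq 1$ iff $p < c^\ast$, so by the terminal-segment property, $\mu_m \in S_p$ iff $p < c^\ast$; equivalently $\rk(A^p) - \rk(A_{\mu_m-1}^p) = [p < c^\ast]$. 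This is precisely the rank-function drop produced by removing the box at $(r^\ast, c^\ast) = (r^\ast, \lambda_{r^\ast})$ from $\lambda$, and semi-standardness of $\tau$ (no $m$ can lie directly below the rightmost one) ensures $\lambda_{r^\ast+1} < c^\ast$, so the removal yields a valid partition. Therefore $A_{\mu_m-1}$ has Jordan type $\lambda - \epsilon_{r^\ast} = \lambda^{(m,\mu_m-1)}$, as claimed. The main obstacle is the combinatorial bookkeeping of the counting step; \Cref{impliesboxyalg} then locates the drop at the final step automatically.
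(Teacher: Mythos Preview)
Your proof is correct and follows the same strategy as the paper: both apply \Cref{impliesboxyalg} along the chain of successive restrictions $A_0,A_1,\dots,A_{\mu_m}$ (with $A_k=A\big|_{V^{(m,k)}}$) to control at which step the Jordan-type box is lost. The paper writes out only the final triple $(C,B,A)=(A_{\mu_m-2},A_{\mu_m-1},A_{\mu_m})$ explicitly and leaves the descent through the rest of the chain implicit, while you carry the whole chain through via the terminal-segment/counting argument; the organization is different (and yours is more self-contained), but the underlying idea is the same.
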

\begin{proof}
Let \(b = A\big|_{V^{(m-1)}}\) and \(B = A\big|_{V^{(m,\mu_m-1)}}\).
Assume \(\mu_m > 1\) or else \(b = B\) and there is nothing to show. Let \(C = A\big|_{V^{(m,\mu_m-2)}}\).

By definition of \(X_\tau\), \(A\in \OO_\lambda\) and \(b\in\OO_{\lambda^{(m-1)}}\). 
Let \(\lambda(B)\) denote the the Jordan type of \(B\) and \(\lambda(C)\) the Jordan type of \(C\).
Since \(\dim V/V^{(m-1)} = \mu_m \) is exactly the number of boxes by which \(\lambda\) and \(\lambda^{(m-1)}\) differ, \(\lambda(B)\) must contain one less box than \(\lambda\), and \(\lambda(C)\) must contain one less box than \(\lambda(B)\).
Let \(c(A)\) denote the column coordinate of the box by which \(\lambda\) and \(\lambda(B)\) differ, and let \(c(B) \) denote the column coordinate of the box by which \(\lambda(B)\) and \(\lambda(C)\) differ.
Then
\[
\rk B^p - \rk C^p = \begin{cases}
    1 & p < c(B) \\
    0 & p \ge c(B)
\end{cases}\,,
\]
so we can apply \Cref{impliesboxyalg} to our choice of \((A,B,C)\) to conclude that \(\rk A^p > \rk B^p \) for \(p < c(B) \). At the same time,
\[
\rk A^p - \rk B^p = \begin{cases}
    1 & p < c(A) \\
    0 & p \ge c(A)
\end{cases}
\]
implies that \(c(A) > c(B) \). We conclude that \(B \in \OO_{\lambda^{(m,\mu_m - 1)}}\) as desired.
\end{proof}

The blocky rank conditions defining \(X_\tau\) in \Cref{eq:blockyXt} can thus be refined to boxy rank conditions.
\begin{proposition}
\label{boxyXt}
\begin{equation}\label{eq:boxyXt} 
    X_\tau = \left\{
        A\in\TT_\mu\cap\n : A\big|_{V^{(i,k)}}\in\OO_{\lambda^{(i,k)}} \text{ for } 1\le k\le \mu_i \text{ and } 1\le i\le m
    \right\}\,.
\end{equation}
\end{proposition}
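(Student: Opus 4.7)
The $\supseteq$ inclusion is immediate: specializing to $k = \mu_i$ in the boxy description recovers the blocky conditions of \Cref{eq:blockyXt}. For the reverse inclusion, fix $A$ in the blocky set. The plan is to prove $A|_{V^{(i,k)}} \in \OO_{\lambda^{(i,k)}}$ for every valid $(i,k)$ by decreasing induction on the pair $(i, k)$ in lexicographic order, running from $(m, \mu_m)$ down to $(1, 1)$. The cases $(i, \mu_i)$ for $1 \le i \le m$ are the base cases, identified directly with the blocky hypothesis.

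For the inductive step from $(i, k)$ to $(i, k-1)$ with $k \ge 2$, I would apply \Cref{boxyalg} to the truncation $A' := A|_{V^{(i,k)}}$, regarded now as an element of $\TT_{\mu'} \cap \n$ inside $\mat(N')$, where $\mu' = (\mu_1, \ldots, \mu_{i-1}, k)$ and $N' = N_{i-1} + k$. First one checks that $A' \in X_{\tau'}$ for the restricted tableau $\tau' := \tau^{(i,k)} \in \cT(\lambda^{(i,k)})_{\mu'}$: the blocky constraints on $A$ at indices $1, \ldots, i-1$ restrict cleanly to the same constraints on $A'$, and the top-most constraint $A'|_{V^{(i)}} = A' \in \OO_{\lambda^{(i,k)}}$ is exactly the inductive hypothesis. \Cref{boxyalg}, now with the $i$-th block of size $k$ playing the role of the last block of size $\mu_m$, then yields $A|_{V^{(i, k-1)}} = A'|_{V^{(i, k-1)}} \in \OO_{\lambda^{(i, k-1)}}$. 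When $k = 1$ we do nothing further in the $i$-th block and simply pass to the base case at $(i-1, \mu_{i-1})$.

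The main obstacle is verifying that the truncation $A'$ genuinely lies in the smaller \mvy slice $\TT_{\mu'}$, so that \Cref{boxyalg} may be invoked in the smaller setup. The subtlety is that the $T$-type modifications of $A$ supported in the last row of the $i$-th block (row $N_i$) are entirely discarded upon truncation, so the last row of the $i$-th block of $A'$ (row $N_{i-1} + k$) is identically zero. This is nevertheless compatible with membership in $\TT_{\mu'}$, whose modifications in its last block are free parameters that may take the value zero. Strict upper-triangularity transfers from $A$ to $A'$ automatically, and the Jordan-type match $\lambda^{(j)}(\tau') = \lambda^{(j)}$ for $j<i$, together with $\lambda^{(i)}(\tau') = \lambda^{(i,k)}$, is a direct unwinding of the tableau operation $\tau \mapsto \tau^{(i,k)}$. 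With this routine bookkeeping settled, the induction runs cleanly via iterated application of \Cref{boxyalg}.
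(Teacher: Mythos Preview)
Your proof is correct and follows the same route as the paper: both deduce the non-obvious inclusion $\subseteq$ by iterating \Cref{boxyalg} on successive truncations. The paper compresses this into a single sentence (``an immediate consequence of \Cref{boxyalg}''), whereas you spell out the descending induction on $(i,k)$ and the bookkeeping needed to see that each truncation $A' = A|_{V^{(i,k)}}$ lands in the smaller slice $\TT_{\mu'}$ so that \Cref{boxyalg} applies again.
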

\begin{proof}
The non-obvious direction of containment is an immediate consequence of \Cref{boxyalg}.    
\end{proof}
\subsection{Irreducibility of \texorpdfstring{\(X_\tau\)}{Xt}}
\label{ss:irred}
We now prove that our matrix varieties are irreducible in top dimension. For \(1\le i\le m\), let \(\rho^{(i)} = iz_1 + (i-1)z_2 + \cdots + z_i\) in \(\ZZ^i\) be the familiar half sum of positive coroots for \(G_i = GL(i,\CC)\), set \(\rho \equiv \rho^{(m)}\) and let \(\langle\phantom{0},\phantom{0}\rangle\) denote the dot product.
\begin{proposition}
\label{Xtirr}
    \(X_\tau\) has one irreducible component \(X^d_\tau\) of maximum dimension
    \(d = \langle \lambda - \mu, \rho\rangle\).
\end{proposition}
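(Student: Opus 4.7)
The plan is to induct on $|\tau|$ via a forgetful projection onto a smaller matrix variety. Set $\tau' = \tau^{(m, \mu_m - 1)}$ when $\mu_m > 1$ and $\tau' = \tau^{(m-1)}$ when $\mu_m = 1$. By \Cref{boxyXt}, restriction to $V^{(m,\mu_m-1)}$ (resp.\ $V^{(m-1)}$) yields a well-defined map $\pi : X_\tau \to X_{\tau'}$, since the boxy rank conditions for $\tau'$ form a subset of those for $\tau$. The small base cases ($|\tau| \le 1$) are immediate. Inductively, assuming $X_{\tau'}$ has a unique top-dimensional component of dimension $\langle \lambda' - \mu', \rho \rangle$, the goal is to show the generic fiber of $\pi$ is irreducible of dimension $m - r$, where $r$ is the row of the unique box in $\lambda \setminus \lambda'$. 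A standard fiber-dimension argument then produces a unique top-dimensional component of $X_\tau$ of dimension $\langle \lambda' - \mu', \rho \rangle + (m-r) = \langle \lambda - \mu, \rho \rangle$, using $\lambda - \lambda' = e_r$, $\mu - \mu' = e_m$ and $\rho_r - \rho_m = (m - r + 1) - 1 = m - r$.

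The fiber analysis proceeds as follows. Writing $A = \bigl[\begin{smallmatrix} B & w \\ 0 & 0 \end{smallmatrix}\bigr]$ with $B \in X_{\tau'}$, the block structure of $\TT_\mu$ combined with strict upper triangularity forces $w_i = 0$ except at the positions $i = N_s$ for $s < m$, with $w_{N-1}$ additionally fixed to $1$ when $\mu_m > 1$; so the allowed $w$'s form an affine space of dimension $m - 1$. Using $A^p = \bigl[\begin{smallmatrix} B^p & B^{p-1} w \\ 0 & 0 \end{smallmatrix}\bigr]$, the rank condition $A \in \OO_\lambda$ given $B \in \OO_{\lambda'}$ translates into the linear condition $w \in \Im B + \Ker B^{c-1}$ together with the openness condition $w \notin \Im B + \Ker B^{c-2}$, where $c$ is the column index of the new box in $\lambda \setminus \lambda'$. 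This is the closure-stable strengthening implicit in \Cref{impliesboxyalg}.

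The remaining step, and the main obstacle, is to verify that the intersection of the $(m-1)$-dimensional affine space of allowed $w$'s with the linear subspace $\Im B + \Ker B^{c-1}$ has dimension exactly $m - r$, with a dense open subset meeting the openness condition. Equivalently, the composition of the inclusion of allowed $w$'s into $\CC^{N-1}$ with the quotient $\CC^{N-1} \twoheadrightarrow \CC^{N-1}/(\Im B + \Ker B^{c-1})$ must surject onto an $(r-1)$-dimensional image. This is a transversality statement relating the block pattern of $\TT_\mu \cap \n$ to the Jordan filtration of $B$; I expect it to follow from a combinatorial induction tracking how the basis vectors $e_{N_s}$ sit relative to $\Im B^i$ and $\Ker B^j$, with $B$'s Jordan structure inductively controlled by the tableau $\tau'$.
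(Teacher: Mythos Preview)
Your outline matches the paper's proof: peel off the last box via the restriction map of Lemma~\ref{irrfib}, identify the fibre over $B\in X_{\tau'}$ as the intersection of an $(m-1)$-dimensional affine slice with a locally closed set determined by the Jordan data of $B$, and conclude by a fibre-dimension principle (the paper's Proposition~\ref{myirrcl}). Your fibre condition $w\in(\Im B+\Ker B^{c-1})\setminus(\Im B+\Ker B^{c-2})$ is equivalent to the paper's, since $(B^{p-1})^{-1}\Im B^p=\Im B+\Ker B^{p-1}$, and your one-line telescoping via $\rho_r-\rho_m=m-r$ is tidier than the paper's explicit expansion.

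The step you leave open is indeed the crux, and is the content of Lemma~\ref{altfibdes}; the paper's argument there is more direct than the combinatorial induction you anticipate. First, any Jordan basis for $B$ gives $\dim(B^{\lambda_r-1})^{-1}\Im B^{\lambda_r}=N-r$ immediately (using only that the removed box sits at a corner, i.e.\ $\lambda_r>\lambda_{r+1}$). Second, the block shape of $\TT_\mu$ forces $B(e_a^{b+1})\equiv e_a^b\pmod L$ whenever $e_a^{b+1}\in V^{(m,\mu_m-1)}$, so each such $e_a^b$ lies in $\Im B+L\subset(B^{\lambda_r-1})^{-1}\Im B^{\lambda_r}+L$; together with $e_a^{\mu_a}\in L$ for $a<m$ this already places every basis vector of $V^{(m,\mu_m-1)}$ except possibly $e_m^{\mu_m-1}$ into the sum. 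The paper then uses any $A$ in the fibre to capture $e_m^{\mu_m-1}$ as well, whence the sum is all of $V^{(m,\mu_m-1)}$ and the intersection with $L$ has dimension $(N-r)+(m-1)-(N-1)=m-r$. The upshot is that the transversality you seek comes from the $\TT_\mu$-structure alone; no inductive bookkeeping of how the $e_{N_s}$ sit against $\Ker B^j$ via $\tau'$ is required.
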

Set \(\tau-\young(m) \equiv \tau^{(m,\mu_m-1)}\), and let \(r\) equal to the row coordinate of the last \(m\), aka the row coordinate of the box by which \(\tau\) and \(\tau - \young(m)\) differ.
\begin{lemma}
\label{irrfib}
The map
\[
X_\tau\to X_{\tau - \young(m)}:A \mapsto A\big|_{V^{(m,\mu_m - 1)}}
\]
has irreducible fibres of dimension \(m - r\).
\end{lemma}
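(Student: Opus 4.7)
My plan is to fix a point $B\in X_{\tau-\young(m)}$ and present the fibre over $B$ as an open subset of an $(m-r)$-dimensional affine space. An $A$ in the fibre agrees with $B$ on its top-left $(N-1)\times(N-1)$ submatrix, so the new data is the last row and last column of $A$. Since $A$ is strictly upper triangular (upper triangular and, as an element of $\OO_\lambda$, nilpotent), the last row vanishes. In the last column, the $\TT_\mu$-form permits nonzero entries only at positions $N_1,\ldots,N_{m-1}$ (from the perturbation $T$) together with a forced $1$ at position $(N-1,N)$ (from $J_\mu$'s superdiagonal, when $\mu_m\ge 2$); all other entries are forced to zero by upper-triangularity or the $\TT_\mu$-pattern. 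The fibre therefore injects into $\CC^{m-1}$ via free parameters $x_1,\ldots,x_{m-1}$.

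By \Cref{boxyXt}, the only rank condition not inherited from $B$ is $A\in\OO_\lambda$. Since $\lambda$ exceeds $\lambda^{(m,\mu_m-1)}$ by a single box at row $r$, this is equivalent to $\rk A^p-\rk B^p=1$ for $1\le p<\lambda_r$ and $\rk A^p-\rk B^p=0$ for $p\ge\lambda_r$. Using the block decomposition $A=\left[\begin{smallmatrix}B&u\\0&0\end{smallmatrix}\right]$ with $u=\left[\begin{smallmatrix}w\\1\end{smallmatrix}\right]$ as in the proof of \Cref{impliesboxyalg}, and setting $w=\sum_i x_i e_{N_i}$, each rank condition becomes an incidence condition of the form $C^{p-1}w+C^{p-2}v\in\Im[C^p\mid C^{p-1}v]$ (a containment at $p=\lambda_r$, with conditions at $p>\lambda_r$ automatic by applying $C$) or its negation (non-containments at $2\le p<\lambda_r$). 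Each is affine-linear in $(x_1,\ldots,x_{m-1})$ with coefficients polynomial in the entries of $B$.

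I would then show that the system decouples: the containment condition at $p=\lambda_r$ splits (upon peeling off successive quotients in the flag $\Im C^0\supset\Im C\supset\cdots$) into $r-1$ equations that uniquely determine $x_1,\ldots,x_{r-1}$ as polynomial functions of $B$ and of the remaining coordinates, while the non-containments at $p<\lambda_r$ collapse into a single non-vanishing condition ensuring that the new box lies in row $r$ rather than a higher row. Projection onto the free coordinates $(x_r,\ldots,x_{m-1})$ then realizes the fibre as an open subset of $\CC^{m-r}$, so it is irreducible of dimension $m-r$ as claimed.

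The main obstacle is the decoupling step: verifying that the containment conditions are in fact triangular in $x_1,\ldots,x_{r-1}$ with invertible leading coefficients, so that these parameters are genuinely determined and not over- or under-constrained. The subtle point is controlling how the basis vector $e_{N_i}$ propagates under iterated $C$-action; the $\TT_\mu$-form ties the $i$th Jordan chain of $B$ to the coordinate $x_i$, and the row $r$ of the new box marks precisely which chains are ``too long'' to be absorbed by the column $v$ alone. A natural route is induction on $\mu_m$, the base case $\mu_m=1$ (where $u=w$, no forced superdiagonal entry appears, and the block-Jordan structure of $B$ is cleanest) being the place to verify triangularity explicitly, and the inductive step peeling off one last-$m$ box at a time.
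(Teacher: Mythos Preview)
Your setup matches the paper's: for fixed $B$ the fibre is parameterized by the last column $v=\sum_i x_i\,e^{\mu_i}_i\in L:=\Sp_\CC(e^{\mu_1}_1,\ldots,e^{\mu_{m-1}}_{m-1})\cong\CC^{m-1}$, and the only new constraint is $A\in\OO_\lambda$. But the decoupling you flag as the main obstacle is not merely hard to verify---it is false as stated. Take $\lambda=(2,2)$, $\mu=(1,1,1,1)$, $\tau=\young(12,34)$, so $r=2$. A generic $B\in X_{\tau-\young(4)}$ has the form $\left(\begin{smallmatrix}0&a&b\\0&0&0\\0&0&0\end{smallmatrix}\right)$ with $a\ne 0$, and the single closed condition $A^2=0$ reads $ax_2+bx_3=0$. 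Thus $x_2$ is determined (by $x_3$ and $B$) while $x_1$ remains free, contrary to your prediction that $x_1,\ldots,x_{r-1}=x_1$ is the determined block. In general, which $r-1$ of the $x_i$ become dependent varies with $B$, so no fixed coordinate projection realizes every fibre as an open subset of $\CC^{m-r}$; the induction on $\mu_m$ you propose cannot rescue a claim that fails already at $\mu_m=1$.

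The paper avoids coordinates entirely. Writing $A=\left[\begin{smallmatrix}B&v+e^{\mu_m-1}_m\\0&0\end{smallmatrix}\right]$ and choosing $u=e^{\mu_m}_m+w\in\Ker A^{\lambda_r}\setminus\Ker A^{\lambda_r-1}$, one expands $A^{\lambda_r}u=0$ to obtain $B^{\lambda_r-1}(v+e^{\mu_m-1}_m)\in\Im B^{\lambda_r}$; the non-vanishing $A^{\lambda_r-1}u\ne 0$ excludes the analogous condition one step down. This identifies the fibre with the locally closed set
\[
\bigl((B^{\lambda_r-1})^{-1}\Im B^{\lambda_r}\ \setminus\ (B^{\lambda_r-2})^{-1}\Im B^{\lambda_r-1}\bigr)\cap L,
\]
an open subset of an affine subspace of $L$, hence irreducible. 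Its dimension is computed in a separate lemma (\Cref{altfibdes}): from the Jordan form of $B$ one reads off $\dim(B^{\lambda_r-1})^{-1}\Im B^{\lambda_r}=N-r$, and one checks directly that $(B^{\lambda_r-1})^{-1}\Im B^{\lambda_r}+L=V^{(m,\mu_m-1)}$ (each $e^b_a$ with $b<\mu_a$ lies in $\Im B+L$), so the intersection with $L$ has dimension $(N-r)+(m-1)-(N-1)=m-r$. No triangularity and no induction are needed.
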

\begin{proof}
Let \(B \in X_{\tau-\young(m)}\) and let \(F_B\) denote the fibre over \(B\).
We'll show that 
\[
    F_B \cong {(B^{\lambda_r - 1})}^{-1}\Im B^{\lambda_r}\cap L \setminus 
    {(B^{\lambda_r - 2})}^{-1}\Im B^{\lambda_r-1}\cap L
\]
for \(L=\Sp_\CC(e^{\mu_1}_1,\ldots,e^{\mu_{m-1}}_{m-1})\).
The dimension count will then follow by \Cref{altfibdes} below. 

Assume \(\mu_m > 1\) and let \(A\in F_B\) take the form 
\[
A = \left[\begin{BMAT}(@){c:c}{c:c}
    B & v + e^{\mu_m - 1}_m \\
    0 & 0
\end{BMAT}\right] =  
\left[\begin{BMAT}(@){c:c}{c:c}
    B & \begin{BMAT}{c}{cc}
        v \\ 1
    \end{BMAT} \\
    \begin{BMAT}{cc}{c}
        0 & 0
    \end{BMAT} & 0 
\end{BMAT}\right]
\]
with 
\(v\in L \). 
Let \(u\in\Ker A^{\lambda_r} \setminus \Ker A^{\lambda_r-1}\) and suppose without loss of generality \(u = e^{\mu_m}_m + w\) for some \(w\in V^{(m,\mu_m-1)}\).
Then 
\[
\begin{aligned}
    0 &= A^{\lambda_r} (u) \\
      &= A^{\lambda_r} ( e^{\mu_m}_m + w ) \\
      &= B^{\lambda_r-1} (v + e^{\mu_m - 1}_m) + B^{\lambda_r} (w) \,.
\end{aligned} 
\]
That is \(v + e^{\mu_m - 1}_m \in {(B^{\lambda_r-1})}^{-1}\Im B^{\lambda_r}\) and \(A\in F_B\) is uniquely specified by an element in
\[
    \left({(B^{\lambda_r-1})}^{-1}\Im B^{\lambda_r} + e^{\mu_m - 1}_m\right)\cap L 
\]
which is isomorphic to \({(B^{\lambda_r-1})}^{-1}\Im B^{\lambda_r} \cap L\) since \(V^{(m,\mu_m-1)} = {(B^{\lambda_r-1})}^{-1}\Im B^{\lambda_r} + L\).

In turn, the isomorphism
\[
    \left({(B^{\lambda_r-1})}^{-1}\Im B^{\lambda_r} \setminus {(B^{\lambda_r-2})}^{-1}\Im B^{\lambda_r-1}\right) \cap L \to F_B : w\mapsto 
        \left[\begin{BMAT}(@){c:c}{c:c}
            B & e^{\mu_m}_m+w \\
            0 & 0
        \end{BMAT}\right]\,,
\]
of \(F_B\) and the locally closed set on the lefthand side, where note \({(B^{\lambda_r-2})}^{-1}\Im B^{\lambda_r-1}\) is excluded, since 
\(A^{\lambda_r-1}(u)\ne 0\), proves that \(F_B\) irreducible. By \Cref{altfibdes} below, \(F_B\) has dimension \(m - r\).
\end{proof}
\begin{lemma}
\label{altfibdes}
Let \(B\in X_{\tau - \young(m)}\). Then
\begin{subequations}
    \begin{align}
        \label{dim1}
        \dim {(B^{\lambda_r - 1})}^{-1}\Im B^{\lambda_r} &= N - r\,,\text{ and} \\
        \label{dim2}
        \dim {(B^{\lambda_r - 1})}^{-1}\Im B^{\lambda_r}\cap L &= m - r\,.
    \end{align}
\end{subequations}
\end{lemma}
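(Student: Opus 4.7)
For equation \eqref{dim1}, the inclusion $\Im B^{\lambda_r} \subset \Im B^{\lambda_r - 1}$ gives a short exact sequence
\[
0 \to \Ker B^{\lambda_r - 1} \to (B^{\lambda_r - 1})^{-1}\Im B^{\lambda_r} \xrightarrow{B^{\lambda_r - 1}} \Im B^{\lambda_r} \to 0,
\]
so $\dim (B^{\lambda_r - 1})^{-1}\Im B^{\lambda_r} = (N - 1) - (\rk B^{\lambda_r - 1} - \rk B^{\lambda_r}) = (N - 1) - (\lambda^{(m, \mu_m - 1)})^T_{\lambda_r}$, where $(\lambda^{(m, \mu_m - 1)})^T_{\lambda_r}$ counts the rows of $\lambda^{(m, \mu_m - 1)}$ of length at least $\lambda_r$. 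Since $\lambda^{(m, \mu_m - 1)}$ is $\lambda$ with the box at row $r$ removed --- which forces $\lambda_r > \lambda_{r+1}$ --- these rows are exactly $1, \ldots, r - 1$, so the count is $r - 1$ and $\dim = N - r$, as desired.

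For equation \eqref{dim2}, by \eqref{dim1} the subspace $(B^{\lambda_r - 1})^{-1}\Im B^{\lambda_r}$ has codimension $r - 1$ in $V^{(m, \mu_m - 1)}$; since $\dim L = m - 1$, the identity $\dim L \cap (B^{\lambda_r - 1})^{-1}\Im B^{\lambda_r} = m - r$ is equivalent to the surjectivity of
\[
\phi : L \hookrightarrow V^{(m, \mu_m - 1)} \twoheadrightarrow V^{(m, \mu_m - 1)}/(B^{\lambda_r - 1})^{-1}\Im B^{\lambda_r}.
\]
An elementary check shows $(B^{\lambda_r - 1})^{-1}\Im B^{\lambda_r} = \Ker B^{\lambda_r - 1} + \Im B$, after which the Jordan decomposition of $B$ identifies the target of $\phi$ canonically with $\bigoplus_{j = 1}^{r - 1} \CC$, the $j$-th summand recording the cyclic-generator-mod-$\Im B$ coefficient in the Jordan block of $B$ corresponding to row $j$ of $\lambda^{(m, \mu_m - 1)}$. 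My candidates for the $r - 1$ pre-images are the vectors $e^{\mu_{i_j}}_{i_j} \in L$ with $i_j$ the rightmost entry in row $j$ of $\tau$ (equivalently, of $\tau - \young(m)$, since $j < r$); SSYT column-strictness gives $i_1 < i_2 < \cdots < i_{r - 1}$, and the horizontal-strip structure of the $m$s in $\tau$ (whose rightmost occurrence is at $(r, \lambda_r)$) forces $i_j \leq m - 1$, so each $e^{\mu_{i_j}}_{i_j}$ lies in $L$.

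The main obstacle is to show $\phi(e^{\mu_{i_1}}_{i_1}), \ldots, \phi(e^{\mu_{i_{r-1}}}_{i_{r-1}})$ span the target. My plan is to prove that the matrix of $\phi$ in these bases is upper triangular with nonzero diagonal. The argument would proceed by induction on the GT-pattern filtration $V^{(1)} \subset V^{(2)} \subset \cdots \subset V^{(m - 1)} \subset V^{(m, \mu_m - 1)}$, using the boxy rank conditions of \Cref{boxyXt} to identify, at each stage, which Jordan row the newly added vector $e^k_i$ extends. Following $i_j$ upward through this filtration should show that $e^{\mu_{i_j}}_{i_j}$ acts as a cyclic generator of the $j$-th Jordan block of $B$, modulo contributions from blocks indexed by $j' > j$; triangularity would then come from the ordering $i_1 < \cdots < i_{r - 1}$ (vectors added later can only pollute blocks they have not yet reached), and the nonzero diagonals from the fact that the last $i_j$ is by construction the entry extending row $j$ of the GT pattern to its final length $\lambda_j$.
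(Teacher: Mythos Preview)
Your argument for \eqref{dim1} is correct and is essentially the same Jordan-type count the paper carries out, just packaged via the short exact sequence rather than an explicit Jordan basis.

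For \eqref{dim2}, your reduction to surjectivity of $\phi$ and the identity $(B^{\lambda_r-1})^{-1}\Im B^{\lambda_r} = \Ker B^{\lambda_r-1} + \Im B$ are both correct, and this is also the paper's starting point. But your plan for proving surjectivity has a genuine gap: the ordering $i_1 < i_2 < \cdots < i_{r-1}$ is false in general. Take $\tau = \young(113,22,4)$, so $\lambda=(3,2,1)$, $\mu=(2,2,1,1)$, $m=4$; the unique $4$ sits at $(3,1)$, giving $r=3$, yet $i_1=\tau(1,3)=3$ while $i_2=\tau(2,2)=2$. Column strictness only forces $i_j<i_{j+1}$ when $\lambda_j=\lambda_{j+1}$; when $\lambda_j>\lambda_{j+1}$ there is no such constraint. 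Without the ordering, your triangularity argument collapses, and it is not even clear your chosen $r-1$ vectors are independent in the quotient (your assertion that ``the last $i_j$ is by construction the entry extending row $j$'' also fails whenever $i_j$ occurs in some row $j'<j$).

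The paper sidesteps all of this by not selecting witnesses. From the shape of the slice one reads off $B(e_a^{b+1}) = e_a^b + v$ with $v\in L$ whenever $e_a^{b+1}\in V^{(m,\mu_m-1)}$, so every such $e_a^b$ lies in $\Im B + L \subset (B^{\lambda_r-1})^{-1}\Im B^{\lambda_r} + L$; the vectors $e_a^{\mu_a}$ for $a<m$ lie in $L$ by definition. (The one leftover vector $e_m^{\mu_m-1}$, present when $\mu_m\geq 2$, lies in $\Ker B^{\lambda_r-1}$ modulo $V^{(m,\mu_m-2)}$ because the $(\mu_m{-}1)$-st $m$ sits in a row $\geq r$.) This gives $V^{(m,\mu_m-1)} = (B^{\lambda_r-1})^{-1}\Im B^{\lambda_r} + L$ outright, and \eqref{dim2} then follows from \eqref{dim1} by the dimension formula for $V'\cap V''$. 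No tracking of Jordan blocks through the GT filtration is needed.
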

\begin{proof}
    By \Cref{boxyalg}, 
    \(B\)
    has Jordan type \(\lambda^{(m,\mu_m - 1)}\) 
    which differs from \(\lambda\) by a single box in position 
    \((r,\lambda_r)\).
    Let \(J = (f_1^1,\ldots,f^{\lambda_1}_1,\ldots,f_r^1,\ldots,f_r^{\lambda_r - 1},\ldots,f_\ell^1,\ldots,f_\ell^{\lambda_\ell})\) be a Jordan basis for \(B\). Then, with respect to \(J\),
\[
\begin{aligned}
    \Im B^{\lambda_r - 1}  
    &= \Sp_\CC (f_c^1 ,\ldots, f_c^{\lambda_c - \lambda_r + 1} : 1\le c\le\ell) \\
    &= 
        \Sp_\CC(f_c^1 ,\ldots, f_c^{\lambda_c - \lambda_r}  : 1\le c\le\ell)
    + \Sp_\CC (f_c^{\lambda_c - \lambda_r + 1}  : 1\le c\le\ell) \\
    &= \Im B^{\lambda_r} + \Sp_\CC (f_c^{\lambda_c - \lambda_r + 1}  : 1\le c\le\ell)\,,
\end{aligned}
\]
where we understand \(f_c^p \equiv 0\) for \(p\le 0\). In particular, \(f_c^{\lambda_c - \lambda_r + 1}\) is equal to \(B^{\lambda_r - 1}(f_c^{\lambda_c})\) and is nonzero for \(c > r\). 
Thus \(\dim {(B^{\lambda_r - 1})}^{-1}\Im B^{\lambda_r} = N - r\).

Let \(A\in F_B\). We claim that
\[
V^{(m,\mu_m-1)} = (A\big|_{V^{(m,\mu_m - 1)}}^{\lambda_r - 1})\Im A\big|_{V^{(m,\mu_m - 1)}}^{\lambda_r} + L \,.
\]
Let \(e_a^b\in\beta_\mu \). If \(b\le\mu_a - 1 \) then \(e_a^b = A\big|_{V^{(m,\mu_m - 1)}} (e_a^{b+1}) - v\) for some \(v\in L\) . 
Since \(A\big|_{V^{(m,\mu_m - 1)}}(e_a^{b+1})\) is clearly in \((A\big|_{V^{(m,\mu_m - 1)}}^{c-1})^{-1}\Im A\big|_{V^{(m,\mu_m - 1)}}^{c}\) for any \(c\) it follows that  \(e_a^b \in (A\big|_{V^{(m,\mu_m - 1)}}^{c-1})^{-1}\Im A\big|_{V^{(m,\mu_m - 1)}}^{c} + L \) for any \(1\le b\le\mu_a\) and \(1\le a\le m\) except of course for \((a,b) = (m,\mu_m)\).

We can therefore apply the elementary fact that the codimension of \(V'\) in \(V' + V''\) is equal to the codimension of \(V'\cap V''\) in \(V''\) for any two vector spaces \(V'\) and \(V''\) to \(V' = (A\big|_{V^{(m,\mu_m - 1)}}^{\lambda_r-1})^{-1}\Im A\big|_{V^{(m,\mu_m - 1)}}^{\lambda_r}\) and \(V'' = L\). 

Together with \Cref{dim1} this gives \Cref{dim2}:
\[
\begin{aligned}
\dim V'\cap V'' &= \dim V' + \dim V'' - \dim (V' + V'') \\
                &= (N-r) + (m-1) - (N-1)\\
                &= m - r\,.
\end{aligned}
\]%
\end{proof}
We would like to use \Cref{irrfib} to establish \Cref{Xtirr}. To do so we will need the following proposition.
\begin{proposition}
\label{myirrcl} 
Let \(f:X\to Y\) be surjective, with irreducible fibres of dimension \(d\). Assume \(Y\) has a component of dimension \(m\) and all other components of \(Y\) have smaller dimension. Then \(X\) has unique component of dimension \(m + d\) and all other components of \(X\) have smaller dimension.
\end{proposition}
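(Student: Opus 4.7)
The plan is to analyze $X$ one irreducible component at a time. Let $X = \bigcup_j X_j$ be the decomposition into irreducible components, let $X_j^\circ = X_j \setminus \bigcup_{k \neq j} X_k$ be the dense open ``interior'' of $X_j$, and let $Y_j = \overline{f(X_j)}$. Since $Y_j$ is irreducible and closed in $Y$, it is contained in some irreducible component of $Y$, so $\dim Y_j \le m$.

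The key observation is that for $x \in X_j^\circ$ the entire fibre $f^{-1}(f(x))$ is contained in $X_j$: it is irreducible of dimension $d$, it is covered by the closed subsets $f^{-1}(f(x)) \cap X_k$, so by irreducibility it lies in some $X_k$, and $x \in X_j^\circ$ forces $X_k = X_j$. Hence the restriction $f|_{X_j} : X_j \to Y_j$ is dominant and its generic fibre coincides with a fibre of $f$, which has dimension $d$, so the fibre dimension theorem yields $\dim X_j = \dim Y_j + d \le m + d$. To produce a component of dimension exactly $m + d$, let $Y_0^\circ = Y_0 \setminus \bigcup_{Y_k \neq Y_0} Y_k$ denote the open stratum of the distinguished $m$-dimensional component. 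For each $y \in Y_0^\circ$ the irreducible fibre $f^{-1}(y)$ lies in some $X_j$, giving a cover $Y_0^\circ = \bigcup_j Z_j$ with $Z_j = Y_0^\circ \setminus f(X \setminus X_j)$ constructible (image of the open set $X \setminus X_j$ by Chevalley, intersected with the open set $Y_0^\circ$); irreducibility of $Y_0^\circ$ forces one $Z_j$ to contain a dense open $U$, whence $U \subseteq f(X_j) \subseteq Y_j$ forces $Y_j = Y_0$ and $\dim X_j = m + d$.

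Uniqueness follows from the same mechanism. If two components $X_j$ and $X_k$ both had dimension $m + d$, then $f(X_j^\circ)$ and $f(X_k^\circ)$ would each be dense constructible subsets of $Y_0$, so their intersection would contain a common point $y$; any preimage $x_j \in X_j^\circ$ of $y$ satisfies $x_j \notin X_k$, yet $y \in f(X_k^\circ)$ combined with the key observation forces $f^{-1}(y) \subseteq X_k$ and hence $x_j \in X_k$, a contradiction. The main obstacle is the existence half of the argument: producing one component of the expected dimension requires the pigeonhole step on the open stratum $Y_0^\circ$, exploiting that a finite constructible cover of an irreducible variety must have at least one dense piece.
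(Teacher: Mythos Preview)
Your proof is correct and follows the same overall strategy as the paper: decompose $X$ into irreducible components $X_j$, show that $\dim X_j = \dim \overline{f(X_j)} + d$ for each $j$, and use the hypothesis on $Y$ to control the dimensions.

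The main difference is in the tools invoked. The paper appeals to Mumford's generic fibre dimension theorem together with the Stacks Project Lemma 005K on constructible sets to argue that $\dim C - \dim\overline{f(C)} = d$ and then to establish uniqueness via open sets $U_0\cap U_1$. Your argument is more self-contained: the ``key observation'' that for $x\in X_j^\circ$ the entire fibre $f^{-1}(f(x))$ lies in $X_j$ (by irreducibility of fibres) does the work directly, and your pigeonhole step on the constructible cover $Y_0^\circ=\bigcup_j Z_j$ replaces the paper's appeal to Lemma 005K. One minor notational wrinkle: you use $Y_j=\overline{f(X_j)}$ and later $Y_0$ for the distinguished $m$-dimensional component of $Y$, which could collide; it would be cleaner to give the latter a different name. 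Otherwise the argument is clean, and arguably fills in a step the paper leaves implicit (namely why the generic fibre of $f|_{C}$ has dimension exactly $d$, not merely at most $d$).
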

To prove \Cref{myirrcl}  will need \cite[I, \S 8, Theorem 2]{mumford1988red} and \cite[Lemma 005K]{stacks-project} which we now recall.
\begin{theorem}
\label{mumf2}
\cite[I, \S 8, Theorem 2]{mumford1988red}
Let \(f:X\to Y\) be a dominating morphism of varieties and let \(r = \dim X - \dim Y\). Then there exists a nonempty open \(U\subset Y \) such that:
\begin{enumerate}
    \item \(U\subset f(X)\)
    \item for all irreducible closed subsets \(W\subset Y\) such that \(W\cap U \ne \varnothing\), and for all components \(Z\) of \(f^{-1}(W)\) such that \(Z\cap f^{-1}(U)\ne\varnothing \), \(\dim Z = \dim W + r\). 
\end{enumerate}
\end{theorem}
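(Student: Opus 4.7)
The plan is to establish Mumford's theorem by reducing to the trivial case of a projection $Y\times\mathbb{A}^r\to Y$ via generic finiteness. Part (1) is immediate: by Chevalley's theorem $f(X)$ is constructible in $Y$, and since $f$ is dominating $\overline{f(X)}=Y$; a constructible dense subset of an irreducible variety must contain a nonempty Zariski open $U_1\subset Y$. For Part (2), the strategy is to factor $f$ generically as a finite morphism composed with the projection, so that the obvious dimension count for the projection transfers back to $f$.

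Concretely, the function field extension $k(Y)\hookrightarrow k(X)$ has transcendence degree $r$. I would pick a transcendence basis $t_1,\ldots,t_r\in k(X)$; together with $f$ these define a dominant rational map $\phi:X\dashrightarrow Y\times\mathbb{A}^r$ satisfying $\pi\circ\phi=f$ for the projection $\pi:Y\times\mathbb{A}^r\to Y$, and since $k(X)$ is finite over $k(Y)(t_1,\ldots,t_r)$, the map $\phi$ is generically finite. Standard spreading out---choose a primitive element of the finite extension, clear denominators, and invert the leading coefficient of its minimal polynomial together with the discriminant---produces a nonempty open $U_2\subset Y$ such that $\phi$ restricts to an everywhere-defined finite morphism $f^{-1}(U_2)\to U_2\times\mathbb{A}^r$ onto a closed subvariety.

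Set $U=U_1\cap U_2$. For an irreducible closed $W\subset Y$ with $W\cap U\ne\varnothing$, the locally closed subvariety $(W\cap U)\times\mathbb{A}^r$ is open and dense in the irreducible $W\times\mathbb{A}^r$ of dimension $\dim W+r$. Because the restriction of $\phi$ is finite, every component of $\phi^{-1}((W\cap U)\times\mathbb{A}^r)=f^{-1}(W)\cap f^{-1}(U)$ has dimension exactly $\dim W+r$. Now if $Z$ is a component of $f^{-1}(W)$ meeting $f^{-1}(U)$, then $Z\cap f^{-1}(U)$ is a nonempty open---hence dense---subset of the irreducible $Z$, and is simultaneously a component of $f^{-1}(W)\cap f^{-1}(U)$, forcing $\dim Z=\dim W+r$.

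The main obstacle is the spreading-out step: one must exhibit a single open $U\subset Y$ over which $\phi$ is an honest everywhere-defined finite morphism, and moreover ensure that no component of $f^{-1}(W)$ meeting $f^{-1}(U)$ is invisible to $\phi$, i.e., lies entirely in the indeterminacy locus. Granting this, the rest is routine dimension theory for finite morphisms; the delicate point is that securing it may require either a mild normality hypothesis on $X$ (so that the indeterminacy locus of $\phi$ has codimension $\ge 2$), or a Noether-normalization-style induction on $\dim Y$ that avoids the issue altogether.
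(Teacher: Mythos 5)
The paper offers no proof of this theorem; it is cited verbatim from Mumford's \emph{Red Book} (I, \S 8, Theorem 2), so I am assessing your argument on its own terms. Your overall strategy --- Chevalley for part (1), and for part (2) a generic Noether normalization reducing $f$ to a finite morphism followed by the projection $Y\times\mathbb A^r\to Y$ --- is indeed the standard route and the one Mumford uses, and you deserve credit for flagging the spreading-out step as the crux.

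That said, there are two genuine gaps. First, the one you acknowledge: the suggested repair via normality of $X$ does not work when $r\ge 2$. If $I\subset X$ is the indeterminacy locus of $\phi$, normality only forces $\codim_X I\ge 2$, so $\dim\overline{f(I)}\le\dim X-2=\dim Y+r-2$, which is not less than $\dim Y$ once $r\ge 2$; thus $I$ may dominate $Y$ and cannot be removed by shrinking $Y$. The correct fix is not to work with a rational transcendence basis at all. Reduce first to $X$ and $Y$ affine with coordinate rings $A\hookrightarrow B$; choose $t_1,\dots,t_r$ among a finite set of $A$-algebra generators of $B$ that are algebraically independent over $\Frac(A)$; apply Noether normalization to the finitely generated $\Frac(A)$-algebra $B\otimes_A\Frac(A)$ to replace the $t_i$ by suitable combinations (still in $B$); then clear the finitely many $A$-denominators occurring in the resulting integral equations. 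This produces a single $g\in A$ with $B_g$ finite over $A_g[t_1,\dots,t_r]$, so that on $f^{-1}(D(g))$ the map $\phi$ is an honest finite morphism with no indeterminacy --- rendering your ``invisible component'' worry moot. Second, a gap you do not flag: the sentence ``because the restriction of $\phi$ is finite, every component of $\phi^{-1}((W\cap U)\times\mathbb A^r)$ has dimension exactly $\dim W+r$'' gives only the upper bound. Finiteness alone does not give the lower bound; for that you need going-down for the integral extension, which requires the target $U\times\mathbb A^r$ to be normal. This is easy to arrange --- intersect $U$ with the smooth locus of $Y$, a dense open since $Y$ is a variety --- but it must be said, since without some such hypothesis a minimal prime over $\mathfrak qB$ need not contract to $\mathfrak q$, and the dimension count can fail.
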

\begin{lemma}
\label{lemma-generic-point-in-constructible}
\cite[Lemma 005K]{stacks-project}
Let $X$ be a topological space. Suppose that
$Z \subset X$ is irreducible. Let $E \subset X$
be a finite union of locally closed subsets (e.g.\ $E$
is constructible). The following are equivalent
\begin{enumerate}
\item The intersection $E \cap Z$ contains an open
dense subset of $Z$.
\item The intersection $E \cap Z$ is dense in $Z$.
\end{enumerate}
\end{lemma}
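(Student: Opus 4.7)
The plan is to prove the two implications separately. The implication \((1)\Rightarrow(2)\) is immediate from the definition of density: any set that contains a dense subset of \(Z\) is itself dense in \(Z\). So the real content is \((2)\Rightarrow(1)\), and the main tool will be the irreducibility of \(Z\) applied to a decomposition induced by the locally closed pieces of \(E\).

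For \((2)\Rightarrow(1)\), I would write \(E = \bigcup_{i=1}^n E_i\) with each \(E_i = U_i\cap C_i\) for \(U_i\subset X\) open and \(C_i\subset X\) closed. Intersecting with \(Z\) and taking closures in \(Z\) gives
\[
Z = \overline{E\cap Z} = \bigcup_{i=1}^n \overline{E_i\cap Z}\,.
\]
Because \(Z\) is irreducible and the union is finite, at least one index \(i\) satisfies \(\overline{E_i\cap Z} = Z\). The key observation is then that \(E_i\cap Z\subset C_i\) with \(C_i\) closed in \(X\), so taking closures in \(X\) yields \(Z\subset C_i\). This collapses \(E_i\cap Z = U_i\cap C_i\cap Z\) to \(U_i\cap Z\), which is open in \(Z\) and, by the choice of \(i\), dense in \(Z\). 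Since \(U_i\cap Z\subset E\cap Z\), we have exhibited an open dense subset of \(Z\) contained in \(E\cap Z\), as required.

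The main (and only real) obstacle is the step that identifies some \(i\) for which \(\overline{E_i\cap Z}=Z\); this uses irreducibility of \(Z\) in an essential way and is precisely where the hypothesis that \(E\) be a \emph{finite} union of locally closed sets is used. The remainder is purely formal manipulation of closures, openness, and containment, so no substantive calculation is needed.
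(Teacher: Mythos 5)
Your proof is correct and is essentially the argument given in the cited reference \cite[Lemma 005K]{stacks-project}; the paper itself supplies no proof, only the citation. The key steps --- finite closure commuting with union, irreducibility forcing one piece $\overline{E_i\cap Z}$ to equal $Z$, and then $Z\subset C_i$ collapsing $E_i\cap Z$ to the open set $U_i\cap Z$ --- are exactly the standard ones.
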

\begin{proof}[Proof of \Cref{myirrcl}]
Let \(X = \cup_{\irr X} C\) be a (finite) decomposition of \(X\). Consider the restriction \(f\big|_C : C \to \overline{f(C)}\) of \(f\) to an arbitrary component. It is a dominant morphism of varieties, with irreducible fibres of dimension \(d\).

We apply \Cref{mumf2}. Let \(U\subset \overline{f(C)}\) be such that for all irreducible closed subsets \(W\subset \overline{f(C)} \) such that \(W\cap U\ne \varnothing\), and for all components \(Z\) of \(f^{-1}(W)\) such that \(Z\cap f^{-1}(U) \ne \varnothing\), \(\dim Z = \dim W + \dim C - \dim \overline{f(C)}\). 
Then, taking \(W = \{y\} \subset U\) for some \(y\in U\subset \overline{f(C)}\), we get that \(\dim f^{-1}(y) = \dim C - \dim \overline{f(C)}\). Since all fibres have dimension \(d\), the difference \(\dim C - \dim \overline{f(C)}\) is constant and equal to \(d\), independent of the component we're in. 

Since \(f\) is surjective, it is in particular dominant, so we have that 
\[
Y = f(X) = f(\cup_{\irr X} C) = \cup_{\irr X} f(C) = \overline{\cup_{\irr X} f(C)}= \cup_{\irr X} \overline{f(C)}\,.
\]
Let \(C = C_0\) be such that \(\dim \overline{f(C_0)} = m\).
Then \(\dim C_0 = d + \dim \overline{f(C_0)} = d + m\).

Let \(f_i = f\big|_{C_i}\) and let \(U_i \subset \overline{f_i(C_i)} \) be the open sets supplied by \Cref{mumf2} or \Cref{lemma-generic-point-in-constructible} for the constructible sets \(E_i = f(C_i)\). Take \( U = U_0\cap U_1\) and let \(y\in U\). Since \(V_i = f_i^{-1}(U) \) contains \(f_i^{-1}(y) = f^{-1}(y)\cap C_i = f^{-1}(y)\) the set \(V = V_0\cap V_1\) is nonempty. That's a nonempty open set contained in \(C_0\cap C_1\). Conclude \(C_0 = C_1\). Note \(V_i =  f^{-1}(U) \cap C_i\).
\end{proof}
We are now ready to prove \Cref{Xtirr}. 
\begin{proof}[Proof of \Cref{Xtirr}]
Consider the restriction map
\[
X_\tau \to X_{\tau^{(m-1)}} \,.
\]
By induction on \(m\), we can assume that \(X_{\tau^{(m-1)}}\) has one irreducible component of dimension 
\(d \),
and apply \Cref{myirrcl} in conjunction with \Cref{irrfib} to conclude that \(X_\tau\) has one irreducible component of dimension \(d+\sum_1^{\mu_m}(m-r_{m,k})\) for \(r_{m,k}\) equal to the row coordinate of the \(k\)th \(m\) in \(\tau\). Note \(X_{\tau^{(1)}} = \{J_{\mu_1}\}\).

We now check that
\(\sum_1^{\mu_m} (m-r_{m,k}) = \langle \lambda-\mu,\rho\rangle - \langle \lambda^{(m-1)}-\mu^{(m-1)},\rho^{(m-1)}\rangle\). We start by expanding the difference on the righthand side.
\[
\begin{aligned}
    \langle \lambda - \mu,\rho \rangle 
    &- \langle \lambda^{(m-1)} - \mu^{(m-1)} ,\rho^{(m-1)}\rangle \\
    &= \lambda_1 - \mu_1 + (m-1) (\lambda_1 - \lambda^{(m-1)}_1) \\  
    &\quad+ \lambda_2 - \mu_2 + (m-2)(\lambda_2 - \lambda^{(m-1)}_2) \\
    &\quad\,\,\,\vdots \\
    &\quad+ \lambda_{m-1} - \mu_{m-1} + (\lambda_{m-1} - \lambda^{(m-1)}_{m-1}) \\
    &\quad+ \lambda_m - \mu_m \\ 
    &= |\lambda| - |\mu| + \sum_{i=1}^{m-1} (m-i) (\lambda_i - \lambda^{(m-1)}_i)
\end{aligned}    
\]
We recognize that \(\lambda_i - \lambda^{(m-1)}_i = n(\tau)_{z_i-z_m}\) and re-sum, setting \(n_{(a,b)} \equiv n(\tau)_{z_a-z_b}\) for convenience.
\[
    \sum_{i=1}^{m-1} (m-i) (\lambda_i - \lambda_i^{(m-1)}) \,=\,\,
    \begin{aligned}
    &n_{(1,m)} +  \\
    &n_{(1,m)}+ n_{(2,m)} + \\
    &n_{(1,m)}+ n_{(2,m)}+ n_{(3,m)}+ \\
    &\,\,\,\vdots \\
    &n_{(1,m)}+ n_{(2,m)}+ n_{(3,m)} + \cdots + n_{(m-1,m)} 
    \end{aligned} 
\]
Observe that, since \(n_{(a,b)}\) is just the number of \(m\) in row \(i\), \(n_{(1,m)} + \cdots + n_{(i,m)} = \mu_m -\) the number of \(m\) in the last \(m-i+1\) rows. Summing the latter terms counts the number of 
\(m\) in row \(i\) exactly \(i\) times, for \(i = 1,\ldots, m\). 
But
\[
    \sum_{k=1}^{\mu_m} r_{m,k} = \sum_{i=1}^{\lambda_1}i\cdot(\textrm{the number of boxes numbered }m\textrm{ in row }i)
\] 
too. Thus upon adding \(0 = \mu_m -\mu_m\) to the re-summation we get
\(m\mu_m - \sum_{k=1}^{\mu_m} r_{m,k}\)
as expected.

Similarly, the fibres of the maps \(X_{\tau^{(i+1)}}\to X_{\tau^{(i)}}\) for \(i = 1,\ldots,m-2\) have dimension 
\[
(i+1)\mu_{i+1} - \sum_{k = 1}^{\mu_{i+1}} r_{i+1,k} = 
\langle \lambda^{(i+1)} - \mu^{(i+1)},\rho^{(i+1)}\rangle - \langle \lambda^{(i)} - \mu^{(i)},\rho^{(i)}\rangle \,.
\]
Since these are the differences making up the telescoping sum
\[
\begin{aligned}
    \langle \lambda - \mu,\rho\rangle 
    &= \langle \lambda - \mu,\rho\rangle  - \langle \lambda^{(m-1)} - \mu^{(m-1)} ,\rho^{(m-1)}\rangle \\
    & + \langle \lambda^{(m-1)} - \mu^{(m-1)} ,\rho^{(m-1)}\rangle - \langle \lambda^{(m-2)} - \mu^{(m-2)} ,\rho^{(m-2)}\rangle + \cdots 
\end{aligned}
\] 
it follows that \(\dim X_\tau = \langle \lambda - \mu,\rho\rangle \).
\end{proof}
\begin{conjecture}
The map in \Cref{irrfib} is a trivial fibration. Consequently \(X_\tau^d = X_\tau\) and \(Z_\tau = \overline{X_\tau} \) is defined by a recurrence \(Z_\tau \cong Z_{\tau-\young(m)} \times \CC^{m-r}\) for \(r\) equal to the row coordinate of the last \(m\) in \(\tau\). 
\end{conjecture}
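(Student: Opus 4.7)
The plan is to argue by induction on $|\mu|$, with the case $\mu=(\mu_1)$ trivial since then $X_\tau=\{J_{\lambda_1}\}$. The inductive hypothesis applied to $\tau-\young(m)$ yields $Z_{\tau-\young(m)}\cong\CC^{d'}$ and realizes $X_{\tau-\young(m)}$ as a dense open subvariety of this affine space. It then suffices to produce a global algebraic section of the map in \Cref{irrfib} together with a trivialization of the associated vector bundle of fibres.

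Set $W_B:=(B^{\lambda_r-1})^{-1}\Im B^{\lambda_r}\cap L$, so that by the proof of \Cref{irrfib} the fibre over $B\in X_{\tau-\young(m)}$ is an open subset of an affine translate of $W_B\subset L$. Since the Jordan type of $B$ is constant on $X_{\tau-\young(m)}$ by \Cref{boxyalg}, $\rk(B^{\lambda_r})$ is constant, and $\mathcal{W}:=\{(B,v):v\in W_B\}$ defines an algebraic rank-$(m-r)$ vector subbundle of the trivial bundle $X_{\tau-\young(m)}\times L$, namely the kernel of the bundle morphism $L\to\operatorname{coker}(B^{\lambda_r})$. A global frame $w_1(B),\dots,w_{m-r}(B)$ of $\mathcal{W}$ then produces a trivializing morphism $\Phi:X_{\tau-\young(m)}\times\CC^{m-r}\to X_\tau$ extending the fibrewise formula in the proof of \Cref{irrfib}; its image is the complement of a proper closed subvariety of the product. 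Irreducibility of $X_\tau$, the identity $X_\tau^d=X_\tau$, and the decomposition $Z_\tau\cong Z_{\tau-\young(m)}\times\CC^{m-r}$ all follow by passing to closures, closing the induction.

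The principal obstacle is producing the global frame for $\mathcal{W}$. One route is to invoke Quillen--Suslin after extending $\mathcal{W}$ from the open subset $X_{\tau-\young(m)}\subset\CC^{d'}$ to a vector bundle on the full affine space; the extension step requires verifying that the kernel description extends coherently without dropping rank, which is itself nontrivial. A more constructive alternative exploits the inductive isomorphism $Z_{\tau-\young(m)}\cong\CC^{d'}$ to express the entries of $B$ as polynomials in the free parameters of the iterated construction, and then builds $w_1(B),\ldots,w_{m-r}(B)$ explicitly via the Jordan-basis description of $\Im B^p$ developed in the proof of \Cref{altfibdes}, tracking the combinatorics of $\tau$ to control the choice of Jordan basis algebraically. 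Either way, the technical crux is patching the pointwise linear-algebraic description of $W_B$ into globally defined algebraic sections, and either approach would also establish the stronger claim that $Z_\tau$ is itself isomorphic to an affine space.
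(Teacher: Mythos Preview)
The statement you are attempting to prove is stated in the paper as a \emph{conjecture}, not as a theorem; the paper offers no proof, so there is nothing to compare your argument against. What you have written is therefore not a reproof of a known argument but an attempted resolution of an open problem, and it should be read as such.

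As a strategy, your outline is sound and you have correctly isolated the crux: the family of subspaces $W_B\subset L$ assembles into an algebraic vector bundle $\mathcal{W}$ of rank $m-r$ over $X_{\tau-\young(m)}$, and the conjecture would follow from a global trivialization of $\mathcal{W}$ (together with a compatible trivialization of the smaller sub-bundle with fibre $(B^{\lambda_r-2})^{-1}\Im B^{\lambda_r-1}\cap L$, which you do not mention but is needed to identify the open fibre). However, neither of your proposed routes is carried to a conclusion. The Quillen--Suslin route requires extending $\mathcal{W}$ to a locally free sheaf on all of $\CC^{d'}$, but $\mathcal{W}$ is defined as the kernel of $L\to\operatorname{coker}(B^{\lambda_r})$, and the rank of $B^{\lambda_r}$ genuinely drops on the boundary $Z_{\tau-\young(m)}\setminus X_{\tau-\young(m)}$; you would have to argue that the kernel sheaf nonetheless remains locally free, which is exactly the hard part. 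The explicit-Jordan-basis route is more promising but is only gestured at.

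There is also a gap in the final step. Even granting a trivialization $\mathcal{W}|_{X_{\tau-\young(m)}}\cong X_{\tau-\young(m)}\times\CC^{m-r}$, ``passing to closures'' does not automatically yield $Z_\tau\cong Z_{\tau-\young(m)}\times\CC^{m-r}$: two varieties with isomorphic dense open subsets need not be isomorphic. You would need the frame $w_1(B),\dots,w_{m-r}(B)$ to be given by polynomials in the entries of $B$ that remain linearly independent on the closure, so that the trivialization and its inverse both extend as regular morphisms. This is plausible and is in the spirit of your ``constructive alternative,'' but it is an additional requirement beyond mere triviality over the open locus, and it is not established in your write-up. In short: correct diagnosis of the obstruction, but the obstruction is not removed, so the conjecture remains open on your account as well.
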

\subsection{Decomposing \texorpdfstring{\(\ot\cap\n\)}{O cap T cap n}}
\label{ss:decom}
We conclude the first part of this paper with a proof of \Cref{mainanne1}.
\begin{theorem}
\label{anne1}
The map 
\(\tau\mapsto Z_\tau\) is a bijection of 
\(\cT(\lambda)_\mu\) and irreducible components of \(\ot \cap \n\). Moreover, \(\dim Z_\tau = \dim X_\tau = \langle \lambda - \mu,\rho\rangle\).
\end{theorem}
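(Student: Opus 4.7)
The plan is to prove the bijection and the dimension equality together in three steps, using Proposition~\ref{Xtirr} (which already supplies the dimension $\langle\lambda-\mu,\rho\rangle$ and the unique top-dimensional component $X_\tau^d$) as the principal input. It remains to establish (a) nonemptiness of every $X_\tau$, (b) the stratification $\OO_\lambda\cap\TT_\mu\cap\n = \bigsqcup_{\tau\in\cT(\lambda)_\mu} X_\tau$ on the open Jordan-type locus, and (c) distinctness of, and exhaustion by, the closures $Z_\tau$.

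For (a), I would argue by induction on the number of boxes of the tableau, lifting through the fibrations $X_{\tau^{(i,k)}}\to X_{\tau^{(i,k-1)}}$ of Lemma~\ref{irrfib} applied at every pair $(i,k)$ (rather than only at the terminal $(m,\mu_m)$). The base case $X_{\tau^{(1,\mu_1)}} = \{J_{\mu_1}\}$ is a single point, and each fibre was identified in that lemma with a nonempty locally closed subset of affine space, so nonemptiness propagates up the chain.

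For (b), I would associate to each $A \in \OO_\lambda\cap\TT_\mu\cap\n$ the chain of Jordan types $\nu^{(i,k)}(A) := \lambda(A|_{V^{(i,k)}})$. Since $\dim V^{(i,k)}$ grows by one at each step, $|\nu^{(i,k)}|$ grows by one, and combined with the pointwise rank inequality $\rk(A|_{V^{(i,k)}})^p \le \rk(A|_{V^{(i,k+1)}})^p$ for all $p$, this forces $\nu^{(i,k+1)}/\nu^{(i,k)}$ to consist of a single box. Labelling each such added box by the $\mu$-block index $i$ produces a filling $\tau(A)$ of $\nu^{(m,\mu_m)}(A)=\lambda$, and semistandardness reduces to the horizontal-strip condition within each $\mu_i$-block, which is precisely the column-strictly-increasing property proved in Lemma~\ref{boxyalg} applied iteratively to every within-block transition. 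By Proposition~\ref{boxyXt}, the fibre of $A \mapsto \tau(A)$ is exactly $X_{\tau(A)}$, giving the asserted disjoint decomposition.

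For (c), since the GT-chain is constant on $X_\tau$ and determines $\tau$, distinct tableaux yield distinct generic points and hence distinct closures $Z_\tau$. The $Z_\tau$ are irreducible of the common dimension $\langle\lambda-\mu,\rho\rangle$, so none properly contains another; each is therefore an irreducible component of $\overline{\OO_\lambda\cap\TT_\mu\cap\n} = \ot\cap\n$, and together they exhaust the set of components. The main obstacle I expect is the semistandard check in step (b): Lemma~\ref{boxyalg} is stated only for the terminal transition $(m,\mu_m-1)\to(m,\mu_m)$, but the same column-strictness is needed at every within-block step $(i,k-1)\to(i,k)$. The rank-jump argument of Lemma~\ref{impliesboxyalg} should generalize cleanly once one tracks how the off-diagonal supports of $\TT_\mu$ interact with passing to the principal submatrix on $V^{(i,k)}$, but this bookkeeping is the technical heart of the proof.
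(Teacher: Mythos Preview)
Your proposal is correct and follows the same route as the paper, only with far more detail. The paper's own argument is two sentences: pick $A$ generic in a component of $\ot\cap\n$, read off $\tau$ from the GT-pattern of Jordan types of the $A|_{V^{(i)}}$, and assert that $\tau\in\cT(\lambda)_\mu$ and $A\in Z_\tau$; the dimension claim is Proposition~\ref{Xtirr}. Your step (a) is implicit in Lemma~\ref{irrfib} (the fibres there are shown nonempty, so the tower of restrictions is surjective from $X_{\tau^{(1)}}=\{J_{\mu_1}\}$ upward), and your step (c) is exactly the paper's generic-point argument.

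The obstacle you flag---that Lemma~\ref{boxyalg} is stated only at the terminal step $(m,\mu_m-1)\to(m,\mu_m)$---is precisely what the paper skips when it asserts ``$\tau\in\cT(\lambda)_\mu$'' without justification. Your diagnosis is right and the fix is clean: for $A\in\ot\cap\n$ the $(i,i)$-diagonal block is forced to be $J_{\mu_i}$ (upper-triangularity kills the extra $\TT_\mu$-entries in that block), so at every within-block transition the three consecutive principal submatrices have exactly the shape demanded by Lemma~\ref{impliesboxyalg}, and the column-coordinate comparison of Lemma~\ref{boxyalg} applies verbatim. The off-diagonal $\TT_\mu$-supports play no role here, so no extra bookkeeping is needed.

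One small caveat on your step (c): the equality $\overline{\OO_\lambda\cap\TT_\mu\cap\n}=\ot\cap\n$ is not automatic; it amounts to ruling out components of $\ot\cap\n$ lying entirely in the boundary $(\overline{\OO_\lambda}\setminus\OO_\lambda)\cap\TT_\mu\cap\n$. The paper's proof has the same implicit gap. It is most cleanly closed by invoking equidimensionality of $\overline{\Gr^\lambda}\cap S^\mu_-$ together with Corollary~\ref{resmvy}, whose proof does not depend on the present theorem.
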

\begin{proof}
    Let \(A\in \ot\cap\n\) be generic for a component and consider the tableau \(\tau\) obtained from the GT-pattern of Jordan types of submatrices \(A\big|_{V^{(i)}}\) for \(1\le i\le m\). Then \(\tau\in\cT(\lambda)_\mu\) and \(A\in Z_\tau\).
\end{proof}
\section{Equations of \mvi cycles}
\label{s:equat}
Let \(G(\cK)\to \Gr : g\mapsto g G(\cO)\) denote the quotient map, and when there is no confusion, set \([g] \equiv g G(\cO)\).
Let \(\Gr_\mu = G_1\xt[t^{-1}]L_\mu\) for \(G_1\xt[t^{-1}] = \Ker(G\xt[t^{-1}]\xrightarrow{t = \infty}G)\).
\subsection{The \mvy isomorphism}
\label{ss:mviso}
\begin{theorem}
\label{mvy}
\cite{cautis2018categorical}
The map \(\phi: \TT_\mu\cap\cN\to G_1\xt[t^{-1}]t^\mu\) defined by 
\[
\begin{aligned}
    \phi(A) &= t^\mu + a(t) \\
    a_{ij}(t) &= - \sum A_{ij}^k t^{k-1} \\
    A_{ij}^k &= k\textrm{th entry from the left of the }\mu_j\times\mu_i\textrm{ block of }A
\end{aligned}
\]
yields the \mvy isomorphism of type \((\lambda,\mu)\)
\(\Psi : \ot\to \overline{\Gr^\lambda}\cap\Gr_\mu\) 
defined by \(\Psi = [\phi(A)]\).
\end{theorem}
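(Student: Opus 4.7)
Since this statement is essentially the main isomorphism constructed in \cite{cautis2018categorical} (building on \cite{mirkovic2007geometric}), I sketch the verification one would carry out in the present setup rather than give a full proof. The plan is to check three things: (i) that $\phi$ lands in $G_1\xt[t^{-1}] t^\mu$; (ii) that the induced map $\Psi$ is a bijection $\TT_\mu \cap \cN \to \Gr_\mu$; and (iii) that $\Psi$ restricts to an isomorphism of $\ot$ with $\overline{\Gr^\lambda} \cap \Gr_\mu$.

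For (i) and (ii), writing $\phi(A) = t^\mu + a(t)$ in the $\mu \times \mu$ block decomposition, each $(i,j)$-block of $a(t)$ is supported in its last row with polynomial entries of degree at most $\min(\mu_i,\mu_j)-1$; this is exactly the shape of $T = A - J_\mu$ for $A \in \TT_\mu$, repackaged. A direct block-by-block computation then shows that $\phi(A)\cdot t^{-\mu}$ evaluates to the identity at $t = \infty$, hence lies in $G_1\xt[t^{-1}]$. Injectivity of $\phi$ is immediate since the formula literally reads off the entries of $T$. Surjectivity of $\Psi$ onto $\Gr_\mu$ follows from a dimension count: both $\TT_\mu \cap \cN$ and $\Gr_\mu$ have the same dimension $\sum_i (2i-1)\mu_i$, and both are irreducible.

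The content of the theorem is (iii). Both closed subsets are cut out by rank conditions: on the matrix side, $A \in \overline{\OO_\lambda}$ iff $\rk(A^p) \leq \sum_{j > p} (\lambda^T)_j$ for all $p$; on the affine Grassmannian side, $[g] \in \overline{\Gr^\lambda}$ iff the relative position of $g\cO^m$ with respect to $\cO^m$ is dominated by $\lambda$, which unwinds to analogous rank conditions on $g$ modulo $t^N$ for $N$ large. The block-to-Laurent-polynomial dictionary defining $\phi$ is set up precisely so that the minors of $A^p$ match the relevant minors of $\phi(A)$ acting on appropriate subquotient lattices.

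The main obstacle is this final matching of rank-stratifying minors. One can carry it out directly via the matrix-units computation of \cite{mirkovic2007geometric}, or more conceptually by realizing $\phi$ as an identification of convolution varieties as in \cite{cautis2018categorical}; in either case the core input is that the block structure of $\TT_\mu$ is tailor-made to be dual to the $G_1\xt[t^{-1}]$-orbit decomposition of $\Gr_\mu$.
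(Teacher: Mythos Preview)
Your outline is reasonable but the paper takes a different, much shorter route: rather than verifying well-definedness, injectivity, surjectivity, and the rank-condition matching separately, it simply writes down an explicit two-sided inverse. Given $[g]\in\Gr$, the inverse sends it to the matrix of multiplication by $t$ on the quotient lattice $\cO^m/g^T\cO^m$, computed in the basis
\[
B = \bigl([e_m],\ldots,[e_m t^{\mu_m-1}],\ldots,[e_1],\ldots,[e_1 t^{\mu_1-1}]\bigr).
\]
Once one checks this is inverse to $\phi$ (a direct lattice computation), all three of your items fall out at once: the $\lambda$-stratification on the matrix side is precisely the Jordan-type stratification of ``multiplication by $t$'' on the lattice side, so no separate minor-matching argument is needed.

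Your approach would also work but has a couple of soft spots as written. In (ii), the dimension you quote, $\sum_i(2i-1)\mu_i$, is $\dim\TT_\mu$, not $\dim(\TT_\mu\cap\cN)$; and in any case ``injective plus equal dimension plus irreducible target'' does not by itself give surjectivity of a morphism of varieties without an additional input (properness, closed image, etc.). In (iii) you correctly identify what must be checked but defer the actual computation to the references. The lattice-model inverse sidesteps both issues: it gives bijectivity directly and makes the compatibility with the orbit stratifications transparent, since the Jordan type of $t\big|_{\cO^m/g^T\cO^m}$ \emph{is} the relative position of $g$ with respect to $\cO^m$.
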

\begin{proof}
The reader can check that, given \([g]\in \Gr\), the map
\[
[g]\mapsto
\left[t\big|_{\cO^m/g^T\cO^m}\right]_B
\]
for \( 
    B = ([e_m],\ldots, [e_m t^{\mu_m -1}],\ldots , [e_1], \ldots,[e_1 t^{\mu_1 -1}])\) is a two-sided inverse.
\end{proof}
\begin{example}
    \label{mvyimage}
Let \(\tau = \young(112,23)\) as before. Then
\[
    \phi(Z_\tau) = \left\{
        g =
\begin{pmatrix}
    t^2 & 0 & 0 \\
    -a-bt & t^2 & 0 \\
    -c & -d & t
\end{pmatrix} : a,d = 0
    \right\}\,.
\]
\end{example}
\begin{corollary}
\label{resmvy}
The restriction \(\psi = \Psi\big|_{\ot\cap \n}\) is an isomorphism of \(\ot\cap \n\) and \(\overline{\Gr^\lambda}\cap S^\mu_-\) which we'll refer to as the restricted \mvy isomorphism of type \((\lambda,\mu)\).
\end{corollary}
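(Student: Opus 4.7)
The plan is to restrict the isomorphism \(\Psi\) of Theorem~\ref{mvy} to the locus cut out by \(A\in \n\) and identify the image as \(\overline{\Gr^\lambda}\cap S^\mu_-\). The starting point is the factorization
\[
\phi(A) = (I + a(t)\,t^{-\mu})\cdot t^\mu,
\]
together with the observation that \(I + a(t)\,t^{-\mu}\) lies in \(U_-^1\xt[t^{-1}]\subset U_-(\cK)\) iff \(a(t)\) is strictly lower triangular: indeed, the \((i,j)\)-entry of \(a(t)\,t^{-\mu}\) is \(a_{ij}(t)\,t^{-\mu_j}\), of \(t\)-degree at most \(\min(\mu_i,\mu_j)-1-\mu_j\le -1\). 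The first task is to verify that ``strictly lower triangular \(a(t)\)'' coincides with ``\(A\in \n\)''. From \(a_{ij}(t)=-\sum_k A^k_{ij}t^{k-1}\), this amounts to checking that the positions of the entries \(A^k_{ij}\) with \(j\ge i\) lie on or below the main diagonal of \(A\). These entries populate the last row of the \(\mu_j\times \mu_i\) block of \(A\) (in row-block \(j\), column-block \(i\)) in the first \(\min(\mu_i,\mu_j)\) columns, namely at position \((\mu_1+\cdots+\mu_j,\,\mu_1+\cdots+\mu_{i-1}+k)\) for \(1\le k\le \min(\mu_i,\mu_j)\); a direct index comparison confirms these lie on or below the diagonal exactly when \(j\ge i\). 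Hence \(A\in\n\) iff \(a(t)\) is strictly lower triangular iff \(\phi(A)\in U_-^1\xt[t^{-1}]\cdot t^\mu\), which gives the inclusion \(\psi(\ot\cap\n) \subseteq \overline{\Gr^\lambda}\cap S^\mu_-\).

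For the reverse inclusion — and hence surjectivity — I would invoke two standard facts about the affine Grassmannian: (i) \(\overline{\Gr^\lambda}\cap S^\mu_- \subseteq \Gr_\mu\), i.e., every MV cycle of coweight \((\lambda,\mu)\) sits inside the transverse MV slice; and (ii) \(\Gr_\mu \cap S^\mu_- = U_-^1\xt[t^{-1}]\cdot L_\mu\). Granted these, any \([g]\in \overline{\Gr^\lambda}\cap S^\mu_-\) lies in \(\overline{\Gr^\lambda}\cap\Gr_\mu\) by (i), so \([g] = \Psi(A)\) for a unique \(A\in \ot\) by Theorem~\ref{mvy}; by (ii), \(\phi(A) \in U_-^1\xt[t^{-1}]\cdot t^\mu\), and the first paragraph gives \(A\in \n\). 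Injectivity of \(\psi\) is inherited from \(\Psi\), and \(\psi^{-1}\) is the restriction of \(\Psi^{-1}\) to \(\overline{\Gr^\lambda}\cap S^\mu_-\), so \(\psi\) is an isomorphism of varieties.

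The main obstacle is establishing fact (i), the containment of MV cycles in the MV slice: this is the key geometric input, not immediate from the definitions of \(S^\mu_-\) and \(\Gr_\mu\). Fact (ii), by contrast, is essentially a matrix computation: using the unique Iwahori factorization \(G_1\xt[t^{-1}]= U_-^1\xt[t^{-1}]\cdot T^1\xt[t^{-1}]\cdot U_+^1\xt[t^{-1}]\) to write \(\phi(A)\,t^{-\mu}=u_0 d_0 v_0\), the condition \([d_0 v_0 t^\mu]\in S^\mu_-\) forces \(d_0=v_0=I\) by valuation constraints on the conjugation \(t^{-\mu}(d_0 v_0)t^\mu\).
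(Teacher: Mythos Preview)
Your argument is correct and actually more explicit than the paper's. Both proofs handle the easy direction the same way: the block--entry bookkeeping you carry out is exactly the content of the paper's one-line claim that \(A\in\n\) iff \(\phi(A)\in N_-(\cK)t^\mu\) (the paper simply asserts this without writing out the index check).

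The difference is in surjectivity. The paper closes with a dimension count: ``since \(\dim(\ot\cap\n)=\dim(\overline{\Gr^\lambda}\cap S^\mu_-)\) and \(\Psi\) is onto, \(\Im\psi=\overline{\Gr^\lambda}\cap S^\mu_-\).'' As written this is terse to the point of being incomplete --- equality of dimensions of two possibly reducible varieties, together with an injection between them, does not by itself force surjectivity. In effect the paper is silently using your fact~(i), because without \(\overline{\Gr^\lambda}\cap S^\mu_-\subseteq\Gr_\mu\) the image of \(\psi\) is not even known to be closed in the target, and no dimension argument can finish. Your route through (i) and (ii) makes this dependency explicit and avoids the dimension count entirely.

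One remark on what you flag as the ``main obstacle'': fact~(i) is less formidable than you suggest. For \(\mu\) dominant one has the stronger containment \(S^\mu_-\subseteq\Gr_\mu\), which follows from the factorization \(U_-(\cK)=U_-^1\xt[t^{-1}]\cdot U_-(\cO)\) (proved root-subgroup by root-subgroup) together with \(t^{-\mu}U_-(\cO)t^\mu\subseteq G(\cO)\). This is on the same order of difficulty as your sketch of fact~(ii). So your proof can be made self-contained with only a few more lines; the paper's, by contrast, leaves the reader to supply this step.
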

\begin{proof}
Suppose \(A\in \TT_\mu \cap\n\). Then \(\phi(A) \in G_1\xt[t^{-1}]t^\mu\cap N_-(\cK)t^\mu = N_-(\cK)t^\mu\) so \(\psi(A) \in S^\mu_- \). Conversely, if \(\phi(A)\in N_-(\cK)t^\mu\), then \(A\in \TT_\mu\cap \n\). Since \(\dim \ot\cap\n = \dim \overline{\Gr^\lambda}\cap S^\mu_-\) and \(\Psi\) is onto, we can conclude that \(\Im \psi = \overline{\Gr^\lambda}\cap S^\mu_-\).
\end{proof}
\subsection{Equal Lusztig data}
\label{ss:equal}
Let \(V\) be a finite dimensional complex vector space. In \cite{kamnitzer2010mirkovic} the author works in the right quotient \(\Gr^T = G(\cO)\backslash G(\cK) \) where he defines the Lusztig datum of an MV cycle using the valuation
\[
\val: V\otimes\cK\to \ZZ: v\mapsto k \text{ if }v\in V\otimes t^k \cO \setminus V\otimes t^{k+1} \cO\,.
\]
Let \(w\in W\). Let \(\omega_i \in X_\bullet(T)\) denote the \(i\)th fundamental weight, \(\omega_i = z_1 + \cdots + z_i\) for \(1\le i\le m\). Fix a highest weight vector \(v_{\omega_i}\) in the \(i\)th fundamental irreducible representation \(L(\omega_i)\) of \(G\) and consider
\[
    D_{w\omega_i}^T : \Gr^T\to\ZZ : G(\cO)g \mapsto \val (g\widetilde{w} \cdot v_{\omega_i})
\] 
with \(\widetilde w\) denoting the lift of \(w\) to \(G\).
These functions cut out the semi-infinite cells \(\flip{S}^\nu_w = G(\cO)t^\nu U_w(\cK) \subset \Gr^T\) for \(U_w = \widetilde w U \widetilde{w}^{-1}\) as follows.
\begin{lemma}[\cite{kamnitzer2010mirkovic} Lemma 2.4]
\label{kamlem}
\(D_{w\omega_i}^T\) is constructible and
\[
    \flip{S}^\nu_w = \{L\in\Gr^T : D_{w\omega_i}^T(L) = \langle \nu, w\omega_i\rangle \text{ for all } i\}
\]
\end{lemma}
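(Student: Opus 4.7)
The plan is to prove the two assertions of the lemma separately: the set-theoretic equality, and the constructibility of $D^T_{w\omega_i}$. Both will follow once I establish that $D^T_{w\omega_i}$ is constant on each semi-infinite cell $\flip S^\nu_w$ with value $\langle \nu, w\omega_i\rangle$.

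First I would verify well-definedness on the quotient $\Gr^T = G(\cO)\backslash G(\cK)$: any $h\in G(\cO)$ acts on $L(\omega_i)\otimes\cK$ through an element of $\End(L(\omega_i))\otimes\cO$, which preserves the standard lattice $L(\omega_i)\otimes\cO$ and hence the valuation. So $D^T_{w\omega_i}([g]) = \val(g\tilde w v_{\omega_i})$ depends only on the coset $G(\cO)g$.

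The key computation is as follows: for a point $L = [t^\nu u]$ of $\flip S^\nu_w = G(\cO)t^\nu U_w(\cK)$ with $u\in U_w(\cK)$, I would write $u\tilde w = \tilde w u'$ for some $u'\in U(\cK)$ (using $U_w = \tilde w U \tilde w^{-1}$), observe that $u'$ fixes the highest-weight vector $v_{\omega_i}$ (as $U$ does, and the action of $U(\cK)$ on the finite-dimensional representation $L(\omega_i)$ still fixes $v_{\omega_i}$), and conclude
\[
    D^T_{w\omega_i}(L) = \val(t^\nu u\tilde w v_{\omega_i}) = \val(t^\nu \tilde w v_{\omega_i}) = \langle \nu, w\omega_i\rangle,
\]
where the last equality uses that $\tilde w v_{\omega_i}$ is a $T$-weight vector of weight $w\omega_i$. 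This establishes the forward inclusion and also shows $D^T_{w\omega_i}$ is constant on each cell. The reverse inclusion then drops out of the Iwasawa decomposition $\Gr^T = \bigsqcup_{\nu'} \flip S^{\nu'}_w$: any $L$ lies in a unique cell $\flip S^{\nu'}_w$, the forward calculation forces $\langle \nu', w\omega_i\rangle = \langle \nu, w\omega_i\rangle$ for all $i$, and since $\{w\omega_i\}_i$ is a basis of $X_\bullet(T)\otimes\mathbb{Q}$ this pins down $\nu' = \nu$.

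Constructibility would then follow from the above together with the fact that each semi-infinite cell is locally closed in $\Gr^T$, and that any finite-type closed subscheme of $\Gr^T$ (such as $\overline{\Gr^\lambda}$) meets only finitely many cells. The main obstacle is this last local finiteness point: one needs to bound, for fixed $\lambda$, the set of $\nu$ with $\flip S^\nu_w \cap \overline{\Gr^\lambda} \ne \varnothing$. This is a standard consequence of the dimension estimates for MV intersections but is the piece most outside routine manipulation of the definitions; once it is in hand, $D^T_{w\omega_i}$ restricted to any such subscheme is constant on each of finitely many locally closed strata, hence constructible.
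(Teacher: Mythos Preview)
The paper does not supply its own proof of this lemma: it is quoted verbatim as \cite{kamnitzer2010mirkovic}, Lemma~2.4, and no argument is given here. So there is nothing in the present paper to compare your proposal against.

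That said, your argument is correct and is essentially the standard one. The forward inclusion is exactly as you say: for $g=t^\nu u$ with $u\in U_w(\cK)$ one has $u\tilde w=\tilde w u'$ with $u'\in U(\cK)$ fixing $v_{\omega_i}$, whence $g\tilde w v_{\omega_i}=t^{\langle\nu,w\omega_i\rangle}\tilde w v_{\omega_i}$. The reverse inclusion via the Iwasawa decomposition and the fact that $\{w\omega_i\}$ is a $\mathbb{Q}$-basis is clean. Your constructibility sketch is also right; the only point you flag as an obstacle---that $\overline{\Gr^\lambda}$ meets only finitely many $\flip S^\nu_w$---is indeed the standard finiteness statement (the relevant $\nu$ lie in $\Conv(W\lambda)\cap(\lambda+Q^\vee)$), and once granted the rest is immediate.
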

By considering the transpose map \(\Gr\to \Gr^T: gG(\cO)\mapsto G(\cO)g^T\) we derive an analogous result for \(S^\nu_w=U_w(\cK) t^\nu G(\cO) \subset\Gr \).
\begin{lemma}
\label{easylemma}
If \(gG(\cO)\in S^\nu_w\subset \Gr\), then \(G(\cO)g^T\in \flip{S}^{\nu}_{ww_0}\subset \Gr^T\). 
In particular, the order on the vertices of the MV polytope of the MV cycle to which \(gG(\cO)\) belongs is reversed, with the datum \(\nu_\bullet = (\nu_w)_{w\in W}\) for \(gG(\cO)\), defining \(\nu_\bullet^T = (\nu_{ww_0})_{w\in W}\) for \(G(\cO)g^T\) and \(S^\nu_w = \{L\in\Gr : D^T_{ww_0\omega_i}(L^T) = \langle\nu,ww_0\omega_i\rangle\}\).
\end{lemma}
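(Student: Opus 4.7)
The plan is to prove the displayed containment $G(\cO)g^T\in\flip{S}^{\nu}_{ww_0}$ by a direct calculation with the semi-infinite cells, then feed the result into \Cref{kamlem} to obtain the formula cutting out $S^\nu_w$ in terms of $D^T_{ww_0\omega_i}$, and finally interpret the relabeling in terms of MV polytopes. The only real content is the identity $U_w^T = U_{ww_0}$, and everything else is a reparametrization.

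First I would record the key group-theoretic fact. Since $U$ denotes the upper-triangular unipotent subgroup and $U_-$ the lower-triangular one, matrix transposition interchanges them, i.e.\ $U^T = U_-$. Because $W = S_m$ for $G = GL(m,\CC)$ the permutation-matrix lift satisfies $\widetilde w^T = \widetilde w^{-1}$, and because $U_- = \widetilde{w_0}U\widetilde{w_0}^{-1}$, we obtain
\[
U_w^T = (\widetilde w U\widetilde w^{-1})^T = \widetilde w\, U_-\,\widetilde w^{-1} = \widetilde w\widetilde{w_0}\,U\,\widetilde{w_0}^{-1}\widetilde w^{-1} = U_{ww_0}\,.
\]
This identity extends verbatim to $\cK$-points.

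Next I would apply the transpose to an element $g\in U_w(\cK)\,t^\nu G(\cO)$. Writing $g = u\,t^\nu k$ with $u\in U_w(\cK)$ and $k\in G(\cO)$, and using that $t^\nu$ is diagonal so $(t^\nu)^T = t^\nu$, one has
\[
g^T = k^T\,t^\nu\,u^T \in G(\cO)\,t^\nu\,U_{ww_0}(\cK)\,,
\]
which is exactly the definition of $\flip{S}^\nu_{ww_0}\subset \Gr^T$. This establishes the first assertion of the lemma. Substituting $ww_0$ for $w$ in \Cref{kamlem} then gives the equation
\[
S^\nu_w = \bigl\{L\in\Gr : D^T_{ww_0\omega_i}(L^T) = \langle \nu, ww_0\omega_i\rangle \text{ for all } i\bigr\}\,,
\]
as claimed.

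For the last sentence about MV polytopes, I would simply reparametrize the Lusztig datum. If the MV cycle through $gG(\cO)$ has vertex $\nu_w$ at $w\in W$, meaning $gG(\cO)\in S^{\nu_w}_w$ for every $w$, the first part of the lemma yields $G(\cO)g^T\in \flip{S}^{\nu_w}_{ww_0}$; changing variables to $w' = ww_0$ shows the $w'$-vertex of the MV polytope of $G(\cO)g^T$ in $\Gr^T$ equals $\nu_{w'w_0}$. The main (and only) obstacle I anticipate is bookkeeping for the two conventions ($S$ versus $\flip S$, and $\Gr$ versus $\Gr^T$) — in particular ensuring that the identity $U^T = U_-$ is applied on the correct side of the coset in each line; once the identity $U_w^T = U_{ww_0}$ is isolated, the proof reduces to a one-line calculation with cosets.
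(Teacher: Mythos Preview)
Your proof is correct and follows essentially the same route as the paper: both reduce the first assertion to the identity $U_w^T = U_{ww_0}$ (the paper carries it out on a specific representative $g = \widetilde w n \widetilde w^{-1} t^\nu$, you state the group identity once and apply it to the coset), and both read off the $D^T_{ww_0\omega_i}$ description of $S^\nu_w$ from \Cref{kamlem}. The only cosmetic difference is that the paper verifies the value $D^T_{ww_0\omega_i}(G(\cO)g^T) = \langle\nu,ww_0\omega_i\rangle$ by directly computing $g^T\widetilde{ww_0}\cdot v_{\omega_i}$, whereas you simply invoke \Cref{kamlem} after the substitution $w\mapsto ww_0$; both are fine.
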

\begin{proof}
Let \(gG(\cO) \in S^\nu_w\subset\Gr\). Then \(g = wnw^{-1}t^\nu\) for some \(n\in U\). Then
\[
\begin{aligned}
    g^T &= t^\nu (w^{-1})^T n^T w^T \\
        &= t^\nu w (w_0 n w_0) w^{-1} \in t^\nu U_{ww_0}(\cK)
\end{aligned}    
\]
The computation
\[
\begin{aligned}
    g^T ww_0\cdot v_{\omega_i} &= t^\nu ww_0 nw_0 w^{-1} ww_0 \cdot v_{\omega_i} \\
                         &= t^\nu ww_0\cdot v_{\omega_i} = t^{\langle \nu, ww_0\omega_i\rangle}\cdot  ww_0 v_{\omega_i} 
\end{aligned}    
\]
checks that 
\(D^T_{ww_0\omega_i} (G(\cO)g^T) = \val(g^T w w_0 \cdot v_{\omega_i})\) 
is equal to 
\(\langle \nu,ww_0\omega_i\rangle\) 
agreeing with \Cref{kamlem}.
\end{proof}

We define \(D_{w\omega_i} \) on \(\Gr\) by 
\begin{equation}
    D_{w\omega_i}(gG(\cO)) = D^T_{ww_0\omega_i}(G(\cO)g^T)
\end{equation}
and rewrite
\[
S^\nu_w = \{L\in \Gr : D_{w\omega_i}(L) = \langle\nu,ww_0\omega_i\rangle \text{ for all } i\}\,.
\]

The Lusztig datum \(n_\bullet\) of an MV cycle in \(\Gr^T\) is defined by
\begin{equation}
    \label{geold}
    n(\phantom{L})_{z_a - z_b} = D_{[a\cdots b]}^T(\phantom{L}) - D_{[a+1\cdots b]}^T(\phantom{L}) - D_{[a\cdots b-1]}^T(\phantom{L}) + D_{[a+1 \cdots b-1]}^T(\phantom{L})
\end{equation}
on generic elements.
Here \([a{\cdots} b]\) is shorthand for the permutation of \(\omega_{b-a+1}\) that has 1s in positions \(a\) through \(b\) and zeros elsewhere. If \(b<a\) we understand \(D_{[ab]}^T \equiv 0\). 

Given \(1\le b\le m \), let \(F_b = \Sp_{\cO}(e_1,\ldots,e_b)\subset\cK^m\), let \(w_0^b\) be the longest element in the Weyl group of \(G_b\), and let \(\omega_i^b\) be the \(i\)th fundamental weight in \(\ZZ^b\), the weight lattice of \(G_b\). 
Let \(g\in U_{-}(\cK)\). Then \(g^T \in U(\cK)\) and 
\[
    D^T_{[ab]}(G(\cO)g^T) = 
    D^T_{[ab]}(G_b(\cO)(g^T\big|_{F_b})) =
    D^T_{w_0^b\omega^b_{b-a+1}}(G_b(\cO)(g^T\big|_{F_b})) \,.
\]
In turn
\[
    D^T_{w_0^b\omega^b_{b-a+1}}(G_b(\cO)(g^T\big|_{F_b})) 
    = D_{\omega^b_{b-a+1}}((g^T\big|_{F_b})^TG_b(\cO)) \,.
\]
In particular, if \((g^T\big|_{F_b})^TG_b(\cO) \in S^\nu\cap S^\eta_-
\subset G_b(\cK)/G_b(\cO)\), then \Cref{easylemma} implies that
\[
    D_{w\omega^b_{b-a+1}}((g^T\big|_{F_b})^TG_b(\cO)) = \begin{cases}
        \langle \eta, \omega^b_{b-a+1}\rangle & w = e \\
        \langle \nu, w\omega^b_{b-a+1}\rangle & w = w_0^b
    \end{cases}\,.
\]

Now fix \(\tau\in\cT(\lambda)_\mu\) and \(d = \langle\lambda-\mu,\rho\rangle\), and consider the inclusion \(\iota: \overline{\Gr^\lambda}\cap S^\mu_- \hookrightarrow \overline{\Gr^\lambda}\cap \overline{S^\mu_-}\). Note that \( \overline{\Gr^\lambda}\cap \overline{S^\mu_-} = \overline{\Gr^\lambda\cap S^\mu_-}\cup X\) where every component of \(X\) has dimension strictly less than \(d\). (This follows from decompositions of \(\overline{\Gr^\lambda}\) and \(\overline{S^\mu_\pm}\) afforded by \cite{mirkovic2007geometric}).
\begin{lemma}
\label{whereisg} For a dense subset of \(A\in Z_\tau\),
\(\psi(A) \in S^\lambda\cap S^\mu_{-}\).
\end{lemma}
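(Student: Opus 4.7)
The containment $\psi(A)\in S^\mu_-$ is already provided by \Cref{resmvy}, so the plan is to show $\psi(A)\in S^\lambda$ on a dense open subset of $Z_\tau$.

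First, taking $i = m$ in the defining conditions of $X_\tau$ forces $A = A\big|_{V^{(m)}} \in \OO_{\lambda^{(m)}} = \OO_\lambda$ for every $A\in X_\tau$, so $X_\tau\subset\OO_\lambda\cap\TT_\mu\cap\n$. Since $X_\tau^d$ is dense in $Z_\tau = \overline{X_\tau^d}$ and $\psi$ is an isomorphism, the image $\psi(X_\tau^d)\subset\Gr^\lambda\cap S^\mu_-$ is dense in $\psi(Z_\tau)$, and hence $\overline{\psi(Z_\tau)}$ is an irreducible closed subvariety of $\overline{\Gr^\lambda}\cap\overline{S^\mu_-}$ of dimension $d = \langle\lambda-\mu,\rho\rangle$ by \Cref{anne1}.

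The key step is then to recognize $\overline{\psi(Z_\tau)}$ as an MV cycle of coweight $(\lambda,\mu)$. Using the decomposition $\overline{\Gr^\lambda}\cap\overline{S^\mu_-} = \overline{\Gr^\lambda\cap S^\mu_-}\cup X$ recalled in the paragraph preceding the lemma, together with the fact that every component of $X$ has dimension strictly less than $d$, irreducibility forces $\overline{\psi(Z_\tau)}\subset\overline{\Gr^\lambda\cap S^\mu_-}$ as a top-dimensional irreducible component, i.e., an MV cycle of coweight $(\lambda,\mu)$. I would then invoke the standard MV-polytope description of such cycles from \cite{kamnitzer2010mirkovic}: the top vertex of the MV polytope of an MV cycle of coweight $(\lambda,\mu)$ is $\nu_e = \lambda$, which together with \Cref{kamlem} and \Cref{easylemma} says that $\overline{\psi(Z_\tau)}\cap S^\lambda$ is open and dense in $\overline{\psi(Z_\tau)}$. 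Pulling this dense open subset back via $\psi$ yields a dense open subset of $Z_\tau$ on which $\psi(A)\in S^\lambda\cap S^\mu_-$, as required.

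The main obstacle, which I expect to be light, is the bookkeeping in the middle step---specifically, confirming that the top-dimensional components of $\overline{\Gr^\lambda\cap S^\mu_-}$ are exactly the MV cycles of coweight $(\lambda,\mu)$ in the convention fixed in \Cref{ss:equal}, and that the ``top vertex equals $\lambda$'' claim translates cleanly, via the functions $D_{w\omega_i}$, to density of the intersection with $S^\lambda$.
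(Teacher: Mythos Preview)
Your proposal is correct and follows essentially the same route as the paper: identify the closure of $\psi(Z_\tau)$ as an MV cycle of coweight $(\lambda,\mu)$ via a dimension count against the decomposition $\overline{\Gr^\lambda}\cap\overline{S^\mu_-}=\overline{\Gr^\lambda\cap S^\mu_-}\cup X$, then invoke \cite[\S3.3]{kamnitzer2010mirkovic} (your ``top vertex equals $\lambda$'') for density of the intersection with $S^\lambda$, and pull back. Two small remarks: your opening observation that $X_\tau\subset\OO_\lambda$ (hence $\psi(X_\tau^d)\subset\Gr^\lambda$) is true but not needed, since $\Gr^\lambda\ne S^\lambda$ and the argument only uses $\dim Z_\tau=d$; and where you use \Cref{anne1} for that dimension, the paper instead cites the pure-dimensionality of $\overline{\Gr^\lambda}\cap S^\mu_-$ from \cite{mirkovic2007geometric}, which amounts to the same thing.
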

\begin{proof}
By \Cref{resmvy}, \(\psi(Z_\tau)\) is dense in a component of \(\overline{\Gr^\lambda}\cap S^\mu_-\) which has pure dimension \(d\) by \cite[Theorem 3.2]{mirkovic2007geometric}. Thus \(Z = \overline{\iota(\overline{\psi(Z_\tau)})}\) is an MV cycle. By \cite[\S 3.3]{kamnitzer2010mirkovic}, \(Z\cap S^\lambda\) is dense in \(Z\), so \(\psi^{-1}(\iota^{-1}(Z\cap S^\lambda))\) is dense in \(Z_\tau\).
\end{proof}
For the purpose of the next lemma, let \(\psi_b: \overline{\OO}_{\lambda^{(b)}}\cap\TT_{\mu^{(b)}}\cap\n_b\to \overline{\Gr^{\lambda^{(b)}}}\cap S^{\mu^{(b)}}_-\) denote the restricted \mvy isomorphism of type \((\lambda^{(b)},\mu^{(b)})\)
with \(\n_b \subset \mat(N_b,\CC)\) denoting the subalgebra of \(N_b\times N_b\) upper-triangular matrices for \(N_b = \mu_1 + \cdots + \mu_b\) and let \(\pi_{b} :\ot\cap\n\to \overline{\OO}_{\lambda^{(b)}}\cap\TT_{\mu^{(b)}}\cap\n_b \) denote the restriction \(\pi_{b}(A) = A\big|_{V^{(b)}}\) with \(1\le b\le m\). 
\begin{lemma}
\label{dgammatau}
For \(1\le a < b\le m\), the generic value of \(D_{w_0\omega_{b-a+1}}\circ\iota\circ\psi\) on \(Z_\tau\) is 
\(
    \langle\lambda^{(b)},w_0^b \omega^b_{b-a+1}\rangle
\).
\end{lemma}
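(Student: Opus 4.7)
The plan is to reduce the computation to the smaller affine Grassmannian $\Gr_b = G_b(\cK)/G_b(\cO)$ for $G_b = GL(b,\CC)$, exploiting the naturality of the \mvy isomorphism under restriction of matrices to $V^{(b)}$. The target value $\langle \lambda^{(b)}, w_0^b \omega^b_{b-a+1}\rangle$ depends only on the smaller partitions $\lambda^{(b)}, \mu^{(b)}$, which already suggests such a reduction.

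Let $A \in Z_\tau$ be generic. By \Cref{whereisg} we may assume $\psi(A) \in S^\lambda \cap S^\mu_-$. The restriction $\pi_b : Z_\tau \to Z_{\tau^{(b)}}$, $A \mapsto A\big|_{V^{(b)}}$, is dominant (by the iterative fibration structure from the proof of \Cref{Xtirr}: each $X_\tau \to X_{\tau^{(i)}}$ is surjective with irreducible fibres, and hence the top-dimensional components are matched up via \Cref{myirrcl}), so $\pi_b(A)$ is generic in $Z_{\tau^{(b)}}$. Applying \Cref{whereisg} in $G_b$ therefore yields $\psi_b(\pi_b(A)) \in S^{\lambda^{(b)}} \cap S^{\mu^{(b)}}_- \subset \Gr_b$.

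The key naturality statement I would then verify is $(\phi(A)^T\big|_{F_b})^T G_b(\cO) = \psi_b(\pi_b(A))$, which reduces to the assertion that the top-left $b \times b$ block of $\phi(A)$ is a matrix representative of $\psi_b(\pi_b(A))$. This is immediate from the formula for $\phi$ in \Cref{mvy}: the $(i,j)$-entry of $\phi(A)$ for $i,j \le b$ depends only on the $(j,i)$-block of $A$, which lies entirely in $\pi_b(A) = A\big|_{V^{(b)}}$. Since $\phi(A) \in N_-(\cK)t^\mu$ is block lower-triangular, its transpose is block upper-triangular and preserves $F_b = \Sp_\cO(e_1,\ldots,e_b)$; restricting to $F_b$ and transposing recovers precisely the top-left $b\times b$ block of $\phi(A)$. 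Combining this with the chain of identities established just before the lemma yields
\[
D_{w_0 \omega_{b-a+1}}(\psi(A)) = D^T_{[ab]}(G(\cO)\phi(A)^T) = D_{\omega^b_{b-a+1}}(\psi_b(\pi_b(A))),
\]
and a final application of \Cref{easylemma} (via the $D$-characterization of $S^\nu_w$ it provides) in $\Gr_b$ to $\psi_b(\pi_b(A)) \in S^{\lambda^{(b)}}$ delivers the desired $\langle \lambda^{(b)}, w_0^b \omega^b_{b-a+1}\rangle$.

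I expect the main obstacle to be the matrix-level naturality of $\phi$ under $\pi_b$. Though conceptually clean, it demands careful bookkeeping of the transpose and block conventions built into the definition of $\phi$: one must verify that the $(j,i)$-block of $A$ indexing the $(i,j)$-entry of $\phi(A)$ is compatible with the formation of the smaller map $\phi_b$, and that the passage through $F_b$ does not introduce any unexpected factors. Everything else is a direct assembly of results already recalled or established in the paper.
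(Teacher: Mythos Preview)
Your argument is essentially the paper's own: reduce to $\Gr_b$ via the naturality $\phi(A)^T\big|_{F_b}=\phi_b(\pi_b(A))^T$, invoke \Cref{whereisg} for $\psi_b$ to land in $S^{\lambda^{(b)}}\cap S^{\mu^{(b)}}_-$, and read off the value from \Cref{easylemma}. The only cosmetic difference is that you phrase genericity via dominance of $\pi_b$ (justified through the fibration in \Cref{Xtirr}) whereas the paper pulls back the dense set $W_b\subset Z_{\tau^{(b)}}$ directly; these are equivalent.
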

\begin{proof}
Let \(1\le a < b\le m\). We apply \Cref{whereisg} to \(\psi_{b}\) and let \(W_b\) be the dense subset of \(B\in Z_{\tau^{(b)}}\) for which 
\(\psi_b(B)\in S^{\lambda^{(b)}} \cap S^{\mu^{(b)}}_-\). Then \(\pi^{-1}_{b}(W_b) \) is dense in \(Z_\tau\).
Moreover, for \(A\in\pi_b^{-1}(W_b)\), \(\phi(A)^T\big|_{F_b} = \phi_b(\pi_{b}(A))^T\), so by \Cref{easylemma},
\(
    G_b(\cO)\phi_b(\pi_{b}(A))^T\in  S^{\lambda^{(b)}}_- \cap S^{\mu^{(b)}}
\)
and
\(
    D_{w_0\omega_{b-a+1}}(\iota(\psi(A))) =
        \langle \lambda^{(b)},w_0^b\omega^b_{b-a+1}\rangle
\).
\end{proof}

We are just about ready to verify \Cref{mainanne2}. Following \cite{BERENSTEIN1988453}, define the Lusztig datum \(n_\bullet\) of \(\tau\in \cT(\lambda)_\mu\) by the zig-zag differences
\begin{equation}
    \label{comld}
    n(\tau)_{z_a-z_b} = \lambda^{(b)}_a - \lambda^{(b-1)}_a 
\end{equation}
for \(1\le a<b\le m\).
\begin{theorem}
\label{anne2} 
\(Z = \overline{\iota(\overline{\psi(Z_\tau)})}\) is an MV cycle of coweight \((\lambda,\mu)\) having Lusztig datum \(n(\tau)_\bullet\) given by \Cref{comld} above.
\end{theorem}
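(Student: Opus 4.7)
The plan is to confirm that $Z$ is an MV cycle, then evaluate the four $D^T$ functions appearing in the geometric Lusztig-datum formula \eqref{geold} on a generic point of $Z$, and finally telescope to recover \eqref{comld}.

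\emph{First}, the MV-cycle claim is a direct packaging of earlier results. By Corollary \ref{resmvy} and Lemma \ref{whereisg}, $\overline{\psi(Z_\tau)}$ is a top-dimensional (dimension $d=\langle\lambda-\mu,\rho\rangle$) irreducible subvariety of $\overline{\Gr^\lambda}\cap S^\mu_-$. Since $\overline{\Gr^\lambda}\cap\overline{S^\mu_-}=\overline{\Gr^\lambda\cap S^\mu_-}\cup X$ with every component of $X$ having dimension strictly less than $d$, its irreducible closure $Z=\overline{\iota(\overline{\psi(Z_\tau)})}$ must be a top-dimensional component of $\overline{\Gr^\lambda\cap S^\mu_-}$, i.e.\ an MV cycle of coweight $(\lambda,\mu)$.

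\emph{Second}, I would fix a generic $A\in Z_\tau$ and evaluate each of $D^T_{[a\cdots b]}$, $D^T_{[a+1\cdots b]}$, $D^T_{[a\cdots b-1]}$, $D^T_{[a+1\cdots b-1]}$ on $G(\cO)\phi(A)^T$. The derivation preceding Lemma \ref{dgammatau} gives, for $1\le a\le b\le m$,
\[
D^T_{[a\cdots b]}(G(\cO)\phi(A)^T) = D^T_{w_0^b\omega^b_{b-a+1}}\bigl(G_b(\cO)(\phi(A)^T|_{F_b})\bigr),
\]
and then Lemma \ref{dgammatau} (via the $w=w_0^b$ branch of the dichotomy following Lemma \ref{easylemma}, invoking Lemma \ref{kamlem}) evaluates the right-hand side as
\[
\langle\lambda^{(b)},w_0^b\omega^b_{b-a+1}\rangle = \lambda^{(b)}_a+\lambda^{(b)}_{a+1}+\cdots+\lambda^{(b)}_b,
\]
since $w_0^b\omega^b_{b-a+1}=z_a+z_{a+1}+\cdots+z_b$ in $\ZZ^b$. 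I would simultaneously pick $A$ generic for all four indices by intersecting the four dense open subsets of $Z_\tau$ supplied by Lemma \ref{dgammatau}, and note that the edge case $b=a+1$, where $[a+1\cdots b-1]$ is an empty chain, is handled by the convention $D^T\equiv 0$; the degenerate case $a=b$ that appears in $D^T_{[aa]}$ follows from the same derivation with the one-term sum $\lambda^{(a)}_a$.

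\emph{Third}, plugging into \eqref{geold} and telescoping yields
\[
n(Z)_{z_a-z_b} = \sum_{i=a}^b\lambda^{(b)}_i - \sum_{i=a+1}^b\lambda^{(b)}_i - \sum_{i=a}^{b-1}\lambda^{(b-1)}_i + \sum_{i=a+1}^{b-1}\lambda^{(b-1)}_i = \lambda^{(b)}_a-\lambda^{(b-1)}_a,
\]
which matches $n(\tau)_{z_a-z_b}$ by \eqref{comld}, completing the identification.

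The main obstacle I anticipate is the transpose/restriction bookkeeping in the second step: one must carefully trace $D^T_{[a\cdots b]}$ through the restriction to $F_b$ and identify it with the $G_b$-function $D^T_{w_0^b\omega^b_{b-a+1}}$, then confirm that the relevant coweight of the double orbit produced by $(\phi(A)^T|_{F_b})^T G_b(\cO)$ is $\lambda^{(b)}$ (and not $\mu^{(b)}$), which is exactly what the $w=w_0^b$ branch of the dichotomy provides. Once this identification is secure for each $(a,b)$, the telescoping in Step 3 is essentially automatic.
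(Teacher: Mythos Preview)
Your proposal is correct and follows essentially the same route as the paper: invoke Lemma~\ref{dgammatau} to evaluate each $D^T_{[a'\cdots b']}$ as $\langle\lambda^{(b')},w_0^{b'}\omega^{b'}_{b'-a'+1}\rangle$, then telescope to obtain $\lambda^{(b)}_a-\lambda^{(b-1)}_a$. The only differences are cosmetic: you expand the inner products as explicit partial sums before cancelling, whereas the paper keeps them in bracket form and cancels via $\omega_{k+1}-\omega_k=z_{k+1}$; and you spell out the MV-cycle claim and the edge cases $b=a+1$ and $a=b$, which the paper leaves implicit.
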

\begin{proof}
Let \(1\le a < b\le m\). By \Cref{dgammatau}, the Lusztig datum of \(Z\), as defined by \Cref{geold}, is given by
\[
\begin{aligned}
    n(Z)_{z_a - z_b} &= 
    \langle\lambda^{(b)}, w_0^b\omega_{b-a+1}\rangle - \langle\lambda^{(b)},w_0^b\omega_{b-a}\rangle \\
    &\quad - \langle\lambda^{(b-1)},w_0^{b-1}\omega_{b-a}\rangle + \langle\lambda^{(b-1)},w_0^{b-1}\omega_{b-a-1}\rangle \\
    &= \langle\lambda^{(b)},w_0^b z_{b-a+1}\rangle - \langle\lambda^{(b-1)},w_0^{b-1}z_{b-a}\rangle \\
    &= \lambda^{(b)}_a - \lambda^{(b-1)}_a\,.
\end{aligned}
\]
\end{proof}
%
%
%
\bibliographystyle{alpha}

\hfill

\end{document}